\newcommand{\End}{\mathrm{End}}
\newcommand{\supp}{\mathrm{supp}}
\newcommand{\id}{\mathrm{id}}
\newcommand{\CA}{\mathrm{CA}}
\newcommand{\EndCA}{\mathrm{EndCA}}
\newcommand{\Hom}{\mathrm{Hom}}
\theoremstyle{plain}
\newtheorem{corollary}{Corollary}
\newtheorem{lemma}{Lemma}
\newtheorem{theorem}{Theorem}
\theoremstyle{definition}
\newtheorem{definition}{Definition}
\newtheorem{example}{Example}
\newtheorem{remark}{Remark}
\begin{document}

\title{Cellular automata over algebraic structures}
\author{Alonso Castillo-Ramirez\footnote{alonso.castillor@academicos.udg.mx}, \ \  O. Mata-Guti\'errez\footnote{osbaldo.mata@academico.udg.mx}, and Angel Zaldivar-Corichi\footnote{angelus31415@gmail.com}  \\
\small{Department of Mathematics, University Centre of Exact Sciences and Engineering,\\ University of Guadalajara, Guadalajara, M\'exico.} }

\maketitle

\begin{abstract}
Let $G$ be a group and $A$ a set equipped with a collection of finitary operations. We study cellular automata $\tau : A^G \to A^G$ that preserve the operations of $A^G$ induced componentwise from the operations of $A$. We show that $\tau$ is an endomorphism of $A^G$ if and only if its local function is a homomorphism. When $A$ is entropic (i.e. all finitary operations are homomorphisms), we establish that the set $\EndCA(G;A)$, consisting of all such endomorphic cellular automata, is isomorphic to the direct limit of $\Hom(A^S, A)$, where $S$ runs among all finite subsets of $G$. In particular, when $A$ is an $R$-module, we show that $\EndCA(G;A)$ is isomorphic to the group algebra $\End(A)[G]$. Moreover, when $A$ is a finite Boolean algebra, we establish that the number of endomorphic cellular automata over $A^G$ admitting a memory set $S$ is precisely $(k \vert S \vert)^k$, where $k$ is the number of atoms of $A$.          \\

\textbf{Keywords: cellular automata, algebraic structures, homomorphisms, $R$-modules, Boolean algebras}.
\end{abstract}

\section{Introduction}\label{intro}

The theory of cellular automata (CA) has important connections with many areas of mathema-tics, such as group theory, topology, symbolic dynamics, theoretical computer science, coding theory, and cryptography. In its classical setting, CA are studied over grids $\mathbb{Z}^d$ and finite alphabets $A$. However, in recent years, various generalisations have gained considerable interest: notably, CA over arbitrary groups instead of grids, \emph{linear CA} over vector spaces as alphabets, and \emph{additive CA} over commutative monoids as alphabets have been thoroughly investigated (e.g. see \cite{CSC06, CSC07,CSC10} and \cite[p. 952--953]{W02}). One-dimensional CA over arbitrary algebraic structures have been studied in \cite{ST12}. In this paper, we propose a general framework that encompasses all these settings by considering CA over arbitrary groups and alphabets with an arbitrary algebraic structure. 

For any group $G$ and any set $A$, let $A^G$ be the set of all functions of the form $x:G \to A$. A \emph{cellular automaton} over $A^G$ is a function $\tau : A^G \to A^G$ defined via a local function $\mu : A^S \to A$, where $S$ is a finite subset of $G$ called a \emph{memory set} for $\tau$. 

We shall assume that the set $A$ has an \emph{algebraic structure}, which means that $A$ is equipped with a collection of finitary operations. In this situation, $A^G$ inherits an algebraic structure via operations defined componentwise from the operations of $A$. Naturally, we restrict our attention to cellular automata that are \emph{endomorphisms} of $A^G$, i.e. cellular automata $\tau : A^G \to A^G$ that preserve the operations of $A^G$. Denote by $\CA(G;A)$ and $\EndCA(G;A)$ the sets of all CA over $A^G$ and all endomorphic CA over $A^G$, respectively. 

The paper is structured as follows. In Section \ref{sec:CA} we review the definition and basic facts of cellular automata over arbitrary groups. In Section \ref{sec:algebras}, we give a brief introduction to algebraic structures in the context of universal algebra, including their direct products and homomorphisms. In Section \ref{sec:main}, we show that the operations of $A^G$ are continuous in its prodiscrete topology and that the shift action of $G$ on $A^G$ preserves the operations. We establish that $\CA(G;A)$ is always an algebra of the same type as $A$, and, when $A$ is entropic (i.e. all its operations are homomorphisms), the set $\EndCA(G;A)$ is a subalgebra $\CA(G;A)$. Our main result of the section (Theorem \ref{th:local}) establishes that a cellular automaton is an endomorphism of $A^G$ if and only if its local function is a homomorphism. This implies, when $A$ is entropic, that $\EndCA(G;A)$ is isomorphic to the direct limit of the directed family $\{ \Hom(A^S, A) : S \subseteq G, \vert S \vert < \infty \}$. Finally, in Section \ref{sec:particular}, we focus on two particular situations: when $A$ is an $R$-module, we show that $\EndCA(G;A)$ is isomorphic to the group algebra $\End(A)[G]$ (Theorem \ref{th:group-algebra}), and when $A$ is a finite Boolean algebra, we establish that there are precisely  $(k \vert S \vert)^k$ endomorphic CA admitting a memory set $S \subseteq G$, where $k$ is the number of atoms of $A$ (Theorem \ref{th:boolean}). 

Our results, particularly Theorems \ref{th:local} and \ref{th:group-algebra}, generalise important facts known for linear CA. Moreover, we consider that the argument given in the proof of Theorem \ref{th:group-algebra} is more concise and elegant than the corresponding proof found in the literature for linear CA (c.f. \cite[Theorem 8.5.2.]{CSC10}); hence, we believe this illustrates the strength and beauty of introducing universal algebras in the context of CA.

%%%%%%%%%%%%%%%%%%%%%%%%%%%%%%%%%%%%%%%%%%%%%%%%%%%%%%%%%%%%%%%%%%%%%%%%%
\section{Cellular automata}\label{sec:CA}

Let $G$ be a group and $A$ a set. The set $A^G$ of all functions $x : G \to A$ is usually called the \emph{configuration space} in this context. The \emph{shift action} of $G$ on $A^G$ is defined by $g \cdot x(h) := x(g^{-1}h)$, for all $x \in A^G$, $g,h \in G$.

The following definition is taken from \cite[Sec.~1.4]{CSC10}.

\begin{definition}\label{defCA}
A \emph{cellular automaton} over $A^G$ is a function $\tau : A^G \to A^G$ such that there is a finite subset $S \subseteq G$, called a \emph{memory set} of $\tau$, and a \emph{local function} $\mu : A^S \to A$ satisfying
\[ \tau(x)(g) = \mu (( g^{-1} \cdot x) \vert_{S}),  \quad \forall x \in A^G, g \in G,  \]
where $\vert_{S}$ denotes the restriction to $S$ of a configuration in $A^G$. 
\end{definition}

\begin{remark}\label{re:local}
Let $\tau : A^G \to A^G$ be a cellular automaton with memory set $S$. The local defining function $\mu : A^S \to A$ may be recovered from $\tau : A^G \to A^G$ via
\[ \mu(y) = \tau(\overline{y}) (e), \quad \forall y \in A^S, \]
where $\overline{y} \in A^G$ is any extension of the function $y : S \to A$ and $e \in G$ is the identity element of the group. 
\end{remark}

\begin{remark}
A memory set for a cellular automaton $\tau : A^G \to A^G$ is normally not unique. Indeed, if $S \subseteq G$ is a memory set for $\tau$, with local defining function $\mu : A^S \to A$, then any superset $S^\prime \supseteq S$ is also a memory set for $\tau$: the local defining function $\mu^\prime : A^{S^{\prime}} \to A$ associated with $S^\prime$ is given by $\mu^\prime(x) = \mu(x\vert_S)$, for any $x \in A^{S^\prime}$.   
\end{remark}

The most famous example of a cellular automaton is John Conway's \emph{Game of Life}, which is defined over $\{0,1 \}^{\mathbb{Z}^2}$ and has memory set $S= \{ -1,0,1 \}^2$. 

A notorious family of examples are the so-called \emph{elementary cellular automata}, which are defined over $A^{\mathbb{Z}}$, with $A=\{0,1\}$, and have memory set $S= \{-1,0,1\}$; they are labeled as `Rule $M$', where $M$ is a number from $0$ to $255$. In each case, the local function $\mu_M : A^S \to A$ of Rule $M$ is determined as follows: let $M_1 \dots M_8$ be the binary representation of $M$ and write the elements of $A^S$ in lexicographical descending order, i.e. $111, 110, \dots, 000$; then, the image of the $i$-th element of $A^S$ under $\mu_M$ is $M_i$. 

\begin{example}\label{rule110}
Let  $G= \mathbb{Z}$ and $A = \{ 0,1 \}$. We may identify the elements of $A^\mathbb{Z}$ with bi-infinite sequences, i.e. for any $x \in A^\mathbb{Z}$, we may write
\[  x = ( \dots, x_{-2}, x_{-1}, \cdot x_0, x_1, x_2, \dots), \]
where $x_i = x(i) \in A$, for all $i \in \mathbb{Z}$, and the dot $\cdot$ is used to distinguish the image of zero in the sequence. Note that the action of $k \in \mathbb{Z}$ on $x \in A^\mathbb{Z}$ is given by 
\[ k \cdot x = ( \dots, x_{-k - 2}, x_{-k - 1}, \cdot x_{-k}, x_{-k + 1}, x_{-k +2}, \dots). \]
Let $S = \{ -1, 0 ,1 \} \subseteq G$ and define $\mu : A^S \to A$ by the following table 
\[ \begin{tabular}{c|cccccccc}
$x\in A^S$ & $111$ & $110$ & $101$ & $100$ & $011$ & $010$ & $001$ & $000$ \\ \hline
$\mu(x)$ & $0$ & $1$ & $1$ & $0$ & $1$ & $1$ & $1$ & $0$
\end{tabular}\] 
The cellular automaton $\tau: A^\mathbb{Z} \to A^\mathbb{Z}$ with memory set $S$ and local function $\mu$ as above is the elementary cellular automaton Rule 110. Remarkably, this cellular automaton is known to be Turing complete \cite{C04}.   
 \end{example}

\begin{example} \label{Ex2}
For any group $G$ and set $A$, fix a function $\phi : A \to A$. The map $\tau : A^G \to A^G$ defined by
\[ \tau(x) = \phi \circ x, \quad \forall x \in A^G \]
is cellular automaton over $A^G$ with memory set $S= \{e \}$ and local function $\mu : A^{S} \to A$ given by $\mu(y) = \phi(y(e))$, $\forall y \in A^S$. Indeed, we verify that, for any $x \in A^G$, $g \in G$,
\[ \tau(x)(g) = \phi(x(g)) = \phi( g^{-1} \cdot x (e) )= \mu( (g^{-1}\cdot x) \vert_{\{e \}}), \]
which satisfies Definition \ref{defCA}.
\end{example}

We endow $A^G$ with the \emph{prodiscrete topology}, which is the product topology of the discrete topology on $A$ (see \cite{M17} for a comprehensive introduction to product topologies). For every $g \in G$, the projection maps $\pi_g : A^G \to A$, defined by $\pi_g(x) := x(g)$, $\forall x \in A^G$, are continuous, and the preimage sets 
 \[  \pi_g^{-1}(a) = \{ x \in A^G : x(g) = a \}, \ \text{ for all } g \in G, a \in A, \]  
 form a subbasis of the prodiscrete topology of $A^G$ (i.e. every open set in $A^G$ may be written as a union of finite intersections of these preimage sets). A function $\tau : A^G \to A^G$ is continuous if and only if, for all $g \in G$, the functions $\pi_g \circ \tau : A^G \to A$ are continuous. 

It is known that every CA over $A^G$ commutes with the shift action (i.e. $\tau(g \cdot x) = g \cdot \tau(x)$, for all $x \in A^G$, $g \in G$) and is continuous in the prodiscrete topology of $A^G$. Moreover, when $A$ is finite, Curtis-Hedlund theorem \cite[Theorem 1.8.1]{CSC10} establishes that every shift commuting continuous function $\tau : A^G \to A^G$ is a cellular automaton. 

For any group $G$ and set $A$, define
\[ \CA(G;A) := \{ \tau : A^G \to A^G \; \vert \;  \tau \text{ is a cellular automaton} \}. \]
As the composition of any two cellular automata is a cellular automaton, the set $\CA(G;A)$ equipped with composition is a monoid (see \cite[Corollary 1.4.11]{CSC10}).   

%%%%%%%%%%%%%%%%%%%%%%%%%%%%%%%%%%%%%%%%%%%%%%%%%%%

\section{Algebraic structures} \label{sec:algebras}

In this section, we introduce some concepts and notation coming from universal algebra. For more details, see \cite{BS12}.

Let $A$ be a non-empty set and $n$ a non-negative integer. An \emph{$n$-ary operation} on $A$ is a function $f : A^n \to A$. A \emph{finitary operation} is an $n$-ary operation, for some $n$. When $n=0$, then $A^0 = \{ \emptyset \}$, so $0$-ary operation $f$ simply corresponds to a \emph{distinguished element} $e_f \in A$. 

An \emph{algebra type} is a set $\mathcal{F}$ of function symbols with a non-negative integer $n$ (the \emph{arity}) associated to each $f \in \mathcal{F}$: in such case, we say that $f \in \mathcal{F}$ is an $n$-ary function symbol. An \emph{algebraic structure of type $\mathcal{F}$}, or simply an \emph{algebra of type $\mathcal{F}$}, is a non-empty set $A$ together with a family $F$ of finitary operations on $A$ such that each $n$-ary operation $f^A \in F$ is indexed by an $n$-ary function symbol $f \in \mathcal{F}$.    

Let $A$ and $B$ algebras of the same type $\mathcal{F}$. We say that $A$ is a subalgebra of $B$ if $A$ is a subset of $B$, and for every $f \in \mathcal{F}$ we have $f^A = f^B \vert_A$. 

Given two algebras $A$ and $B$ of the same type $\mathcal{F}$, the \emph{direct product} $A \times B$ is an algebra of type $\mathcal{F}$ with componentwise operations: for every $n$-ary $f \in \mathcal{F}$, $a_1, \dots, a_n \in A$, $b_1, \dots, b_n \in B$,
\[ f^{A\times B}( (a_1, b_1), \dots, (a_n, b_n)) = (f^A(a_1, \dots, a_n), f^B(b_1, \dots, b_n)). \]
Hence, for each algebra $A$ and $m \geq 0$, we may define an algebra $A^m$. This definition of direct product may be generalised to arbitrary direct products.

A \emph{homomorphism} from $A$ to $B$ is a function $\phi : A \to B$ such that, for all $n$-ary $f \in \mathcal{F}$, $a_1, \dots, a_n \in A$, we have
\[ (\phi \circ f^A) (a_1, \dots a_n) = f^B ( \phi(a_1), \dots, \phi(a_n)).   \]
If $e_A$ and $e_B$ are distinguished elements of $A$ and $B$, respectively, corresponding to the same $0$-ary operation symbol, a homomorphism $\phi : A \to B$ satisfies $\phi(e_A) =e_B$. An \emph{endomorphism} of $A$ is simply a homomorphism from $A$ to $A$. Define the sets
\begin{align*}
\Hom(A,B) & := \{ \phi : A \to B \; \vert \; \phi \text{ is a homomorphism} \}, \\
\End(A) & := \Hom(A,A). 
\end{align*}

Two algebras $A$ and $B$ of type $\mathcal{F}$ are isomorphic if there exists a bijective homomorphism from $A$ to $B$; in such case, we write $A \cong B$

An algebra $A$ is called \emph{entropic} if for all $n$-ary $f \in \mathcal{F}$, we have $f^A \in \Hom(A^n, A)$ \cite{E62}. Entropic algebras are also known in the literature as \emph{medial}, \emph{commutative}, \emph{bi-commutative}, \emph{abelian}, among other names \cite{LP15}.

\begin{lemma}[\cite{K73}] \label{entropic}
Let $A$ be an algebra of type $\mathcal{F}$. The following statements are equivalent:
\begin{enumerate}
\item $A$ is entropic. 

\item For every $n$-ary $f \in \mathcal{F}$ and every $m$-ary $g \in \mathcal{F}$,
\[ f^A(g^A(a_{11}, \dots, a_{m1}),\dots, g^A(a_{1n},\dots, a_{mn})) = g^A ( f^A(a_{11}, \dots, a_{1n}),\dots f^A(a_{m1}, \dots, a_{mn})),  \]
for every $a_{ij} \in A$, $i=1,\dots, m$, $j = 1, \dots, n$.

\item For every algebra $X$ of type $\mathcal{F}$, $\Hom(X,A)$ is also an algebra of type $\mathcal{F}$ by defining 
\[ f^{\Hom(X,A)}(\phi_1, \dots, \phi_n)(x) := f^A ( \phi_1(x), \dots, \phi_n(x)), \]
for every $f \in \mathcal{F}$, $\phi_1, \dots, \phi_n \in \Hom(X,A)$, $x \in X$. 
\end{enumerate}
\end{lemma}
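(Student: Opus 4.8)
The plan is to prove the cycle of implications $(1)\Rightarrow(2)\Rightarrow(3)\Rightarrow(1)$; each step is a matter of unwinding the relevant definition, so the real work is just keeping the indices straight.

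\textbf{$(1)\Rightarrow(2)$.} Here I would spell out what it means for an $n$-ary operation $f^A$ to be a homomorphism $A^n\to A$. Given an $m$-ary $g\in\mathcal{F}$ and elements $a_{ij}\in A$ ($1\le i\le m$, $1\le j\le n$), form the tuples $\bar a_i:=(a_{i1},\dots,a_{in})\in A^n$. Since the operations of the direct power $A^n$ are componentwise, the $j$-th coordinate of $g^{A^n}(\bar a_1,\dots,\bar a_m)$ is $g^A(a_{1j},\dots,a_{mj})$; applying the homomorphism identity $f^A\bigl(g^{A^n}(\bar a_1,\dots,\bar a_m)\bigr)=g^A\bigl(f^A(\bar a_1),\dots,f^A(\bar a_m)\bigr)$ and reading off coordinates yields exactly the displayed identity of (2). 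The same computation run backwards gives $(2)\Rightarrow(1)$, so $(1)$ and $(2)$ are really two phrasings of the same condition; I include the full cycle only for transparency.

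\textbf{$(2)\Rightarrow(3)$.} Fix an algebra $X$ of type $\mathcal{F}$, an $n$-ary $f\in\mathcal{F}$, and homomorphisms $\phi_1,\dots,\phi_n\in\Hom(X,A)$. The point to check is that the pointwise-defined map $\psi:=f^{\Hom(X,A)}(\phi_1,\dots,\phi_n):X\to A$ is again a homomorphism, i.e.\ that $\Hom(X,A)$ is closed under each $f^{\Hom(X,A)}$; granting this, $\Hom(X,A)$ equipped with these pointwise operations is by construction an algebra of type $\mathcal{F}$. To see that $\psi$ respects an $m$-ary $g\in\mathcal{F}$, take $x_1,\dots,x_m\in X$ and compute $\psi\bigl(g^X(x_1,\dots,x_m)\bigr)$: expand by the definition of $\psi$, then use that each $\phi_j$ is a homomorphism so $\phi_j\bigl(g^X(x_1,\dots,x_m)\bigr)=g^A(\phi_j(x_1),\dots,\phi_j(x_m))$, and finally invoke the identity of (2) with the substitution $a_{ij}:=\phi_j(x_i)$; one then arrives at $g^A\bigl(\psi(x_1),\dots,\psi(x_m)\bigr)$, as needed. (If one wants $\Hom(X,A)$ to be literally non-empty, this is automatic in the cases used later, e.g.\ $X=A^S$, since the coordinate projections are homomorphisms.)

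\textbf{$(3)\Rightarrow(1)$.} I would specialise (3) to $X=A^n$, where $f\in\mathcal{F}$ is $n$-ary. Since operations on $A^n$ are componentwise, each coordinate projection $\pi_j:A^n\to A$ is a homomorphism, so $\pi_1,\dots,\pi_n\in\Hom(A^n,A)$; by (3) the element $\theta:=f^{\Hom(A^n,A)}(\pi_1,\dots,\pi_n)$ lies in $\Hom(A^n,A)$. Evaluating $\theta$ at an arbitrary $\bar a=(a_1,\dots,a_n)$ via the defining formula of (3) gives $\theta(\bar a)=f^A(\pi_1(\bar a),\dots,\pi_n(\bar a))=f^A(a_1,\dots,a_n)$, so $\theta=f^A$; hence $f^A\in\Hom(A^n,A)$, and since $f$ was arbitrary, $A$ is entropic. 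The only genuine obstacle in the whole argument is the index bookkeeping in $(2)\Rightarrow(3)$: the range $1\le j\le n$ indexes the arguments of $f$ and the maps $\phi_j$, while $1\le i\le m$ indexes the arguments of $g$ and the points $x_i$, and the identity of (2) must be applied with the ``transposed'' substitution $a_{ij}=\phi_j(x_i)$. Degenerate cases — in particular a nullary $f$, where an empty tuple of arguments must be read correctly — are subsumed by the same computations with no extra effort.
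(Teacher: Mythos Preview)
The paper does not actually prove this lemma: it is stated with a citation to Klukovits \cite{K73} and no proof environment follows. Your cyclic argument $(1)\Rightarrow(2)\Rightarrow(3)\Rightarrow(1)$ is correct and is precisely the standard proof one finds in the universal-algebra literature; the index bookkeeping in $(2)\Rightarrow(3)$ with the substitution $a_{ij}=\phi_j(x_i)$ is handled correctly, and the specialisation $X=A^n$ with coordinate projections in $(3)\Rightarrow(1)$ is the right move. The only residual subtlety you flag yourself---non-emptiness of $\Hom(X,A)$ when the paper insists algebras have non-empty underlying set---is indeed a genuine wrinkle in the literal statement of (3), but it does not affect the implications you need, since in $(3)\Rightarrow(1)$ the projections witness non-emptiness and in $(2)\Rightarrow(3)$ closure under the pointwise operations is the substantive content regardless.
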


In particular, if $A$ is an entropic algebra, then $\End(A)$ is an algebra of the same type as $A$. Examples of entropic algebras are commutative semigroups (which include abelian groups) and modules over commutative rings (which include vector spaces). Note that a magma $A$ (i.e. an algebra with a single binary operation) is entropic if and only if $(a \cdot b) \cdot (c \cdot d) = (a \cdot c) \cdot (b \cdot d)$, for all $a, b, c, d  \in A$.

%%%%%%%%%%%%%%%%%%%%%%%%%%%%%%%%%%%%%%%%%%%%%%%%%
\section{Cellular automata over algebras} \label{sec:main}

Throughout this section, let $A$ be an algebra of type $\mathcal{F}$. Then, the configuration space $A^G$ is also an algebra of type $\mathcal{F}$: for any $n$-ary operation $f^A : A^n \to A$, we have an $n$-ary operation $f^{A^G} : (A^G)^n \to A^G$ induced componentwise as follows: for any $x_1, x_2, \dots, x_n \in A^G$ and $g \in G$, 
\[  f^{A^G}(x_1, x_2, \dots, x_n) (g) := f^A( x_1(g) , x_2(g), \dots, x_n(g)).  \]

A \emph{topological algebra of type $\mathcal{F}$} is an algebra $A$ of type $\mathcal{F}$ that is also a topological space in which all $n$-ary operations $f^A : A^n \to A$ are continuous (considering $A^n$ with the product topology). 

\begin{theorem}
With respect to the prodiscrete topology, $A^G$ is a topological algebra of type $\mathcal{F}$. 
\end{theorem}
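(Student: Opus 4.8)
The plan is to reduce the statement to the universal property of the product topology on the target. Recall (as already noted in the text, now applied with an arbitrary source) that a map $\tau$ from any topological space into $A^G$ is continuous if and only if $\pi_g \circ \tau$ is continuous for every $g \in G$, where $\pi_g : A^G \to A$ is the projection $x \mapsto x(g)$. So fix an $n$-ary function symbol $f \in \mathcal{F}$; to prove $f^{A^G} : (A^G)^n \to A^G$ is continuous (with $(A^G)^n$ carrying the product topology), it suffices to show that $\pi_g \circ f^{A^G} : (A^G)^n \to A$ is continuous for each $g \in G$.

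The next step is to unwind the componentwise definition of $f^{A^G}$. For $x_1, \dots, x_n \in A^G$ we have
\[ (\pi_g \circ f^{A^G})(x_1, \dots, x_n) = f^{A^G}(x_1,\dots,x_n)(g) = f^A(x_1(g), \dots, x_n(g)) = f^A(\pi_g(x_1), \dots, \pi_g(x_n)), \]
so $\pi_g \circ f^{A^G} = f^A \circ (\pi_g \times \cdots \times \pi_g)$, the composite of the $n$-fold product of the projection $\pi_g$ with the operation $f^A$. Now each $\pi_g : A^G \to A$ is continuous (it is a coordinate projection of a product), hence the product map $(\pi_g)^{\times n} : (A^G)^n \to A^n$ is continuous; and $f^A : A^n \to A$ is continuous because $A$ carries the discrete topology and a \emph{finite} product of discrete spaces is discrete, so every function out of $A^n$ is automatically continuous. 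Composing the two, $\pi_g \circ f^{A^G}$ is continuous, and by the first paragraph $f^{A^G}$ is continuous, as required. The nullary case $n = 0$ is handled the same way (the empty product argument), or directly: $f^{A^G}$ is the constant configuration $g \mapsto f^A$, which is continuous.

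The argument is essentially bookkeeping, so I do not expect a genuine obstacle; the only points deserving a moment's care are (a) invoking the universal property in the "maps into a product" direction rather than the "maps out of a product" direction, and (b) observing that it is the discreteness of $A$ itself (together with finiteness of each arity, making $A^n$ discrete) that trivialises the continuity of $f^A$ — this is why no hypothesis on $A$ beyond being a set with operations is needed.
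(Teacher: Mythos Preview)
Your proof is correct and follows essentially the same route as the paper's: both factor $\pi_g \circ f^{A^G}$ as $f^A$ composed with the map $(x_1,\dots,x_n)\mapsto (x_1(g),\dots,x_n(g))$, and then argue each factor is continuous. The only cosmetic difference is that you invoke the standard fact that a finite product of continuous maps is continuous, whereas the paper verifies continuity of this map directly via a preimage computation; you also handle the nullary case explicitly, which the paper omits.
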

\begin{proof}
Consider any $n$-ary $f \in \mathcal{F}$. First, observe that the operation $f^A : A^n \to A$ is continuous as both spaces $A^n$ and $A$ have the discrete topology (as the finite product of discrete spaces is discrete).

Fix $g \in G$ and consider the function $\pi^\prime_g : (A^G)^n \to A^n$ defined as $\pi^\prime_g(x_1, x_2, \dots, x_n) = (x_1(g), x_2(g), \dots, x_n(g))$, for every $x_1, x_2, \dots, x_n \in A^G$. The preimage of any $(a_1, a_2, \dots, a_n) \in A^n$ under $\pi^\prime_g$ is
\begin{align*}
(\pi^\prime_g)^{-1}(a_1, a_2, \dots, a_n) & = \{ (x_1, x_2, \dots, x_n) \in (A^G)^n : x_i(g) = a_i, \forall i  \}   \\
& = \{ x_1 \in A^G : x_1(g) = a_1 \} \times \dots \times \{ x_n \in A^G : x_n(g) = a_n \} \\
& = \pi^{-1}_g(a_1) \times \dots \times \pi^{-1}_g(a_n).
\end{align*}
This is an open set in $(A^G)^n $ as it is a Cartesian product of the open sets $\pi^{-1}_g(a_i)$ of $A^G$. Thus $\pi^\prime_g$ is a continuous function for any $g \in G$.

The operation $f^{A^G} : (A^G)^n \to A^G$ is continuous if and only if $\pi_g \circ f^{A^G} : (A^G)^n \to A$ is continuous for all $g \in G$. Note that
\[ \pi_g \circ f^{A^G} = f^A \circ \pi^\prime_g. \]
As both $f^A$ and $\pi^\prime_g$ are continuous, it follows that $\pi_g \circ f^{A^G}$ is continuous. 
\end{proof}

%Consider any $n$-ary $f \in \mathcal{F}$, and fix $x_2, \dots, x_n \in A^G$ and $g \in G$. We shall show that the function $\pi_g \circ f^{A^G}(-, x_2, \dots, x_n) : A^G \to A$ is continuous. To see this, note that the preimage of any $a \in A$ is the open set
%\begin{align*}
%\pi_g \circ f^{A^G}(-, x_2, \dots, x_n)^{-1}(a) & = \{ x \in A^G : f^{A^G}(x , x_2, \dots, x_n)(g) = a \}   \\ 
%& = \{ x \in A^G : f^A(x(g) , x_2(g), \dots, x_n(g)) = a \} \\
%& = \bigcup_{b \in B} \pi_g^{-1}(b), 
%\end{align*}
%where $B := \{ b \in A : f^A(b, x_2(g), \dots, x_n(g)) = a \}$. 
%Analogously, we show that the functions $\pi_g \circ f^{A^G}(x_1, -, \dots, x_n)$, $\dots$, $\pi_g \circ f^{A^G}(x_1, x_2, \dots, x_{n-1}, -)$ are continuous, so the result follows. 

Let $G$ be a group acting on an algebra $X$. We say that $X$ is a \emph{$G$-algebra} if $G$ acts by homomorphisms, i.e. for all $g \in G$, the function $\varphi_g : X \to X$ defined by $\varphi_g(x) = g \cdot x$, $\forall x \in X$, is an endomorphism of the algebra. 

\begin{theorem}\label{th:action}
With respect to the shift action, $A^G$ is a $G$-algebra.   
\end{theorem}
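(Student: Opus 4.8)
The plan is to verify directly from the definitions that, for each fixed $g \in G$, the shift map $\varphi_g : A^G \to A^G$, $\varphi_g(x) = g \cdot x$, is a homomorphism of the algebra $A^G$ of type $\mathcal{F}$; that is, it commutes with every operation $f^{A^G}$. Since the operations of $A^G$ were defined coordinatewise and the shift action merely reindexes coordinates uniformly, the two should commute essentially by bookkeeping.

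Concretely, I would fix an arbitrary $n$-ary function symbol $f \in \mathcal{F}$ and arbitrary configurations $x_1, \dots, x_n \in A^G$, and show the equality of configurations
\[ \varphi_g\bigl(f^{A^G}(x_1, \dots, x_n)\bigr) = f^{A^G}\bigl(\varphi_g(x_1), \dots, \varphi_g(x_n)\bigr) \]
by evaluating both sides at an arbitrary $h \in G$. For the left-hand side, unwinding the definition of the shift action and then the componentwise definition of $f^{A^G}$ gives
\[ \varphi_g\bigl(f^{A^G}(x_1, \dots, x_n)\bigr)(h) = f^{A^G}(x_1, \dots, x_n)(g^{-1}h) = f^A\bigl(x_1(g^{-1}h), \dots, x_n(g^{-1}h)\bigr). \]
For the right-hand side, applying first the componentwise definition of $f^{A^G}$ and then the definition of the shift action to each entry gives
\[ f^{A^G}\bigl(\varphi_g(x_1), \dots, \varphi_g(x_n)\bigr)(h) = f^A\bigl((g \cdot x_1)(h), \dots, (g \cdot x_n)(h)\bigr) = f^A\bigl(x_1(g^{-1}h), \dots, x_n(g^{-1}h)\bigr). \]
The two expressions coincide, so the equality holds at every $h \in G$, hence as configurations, and therefore $\varphi_g \in \End(A^G)$.

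There is no real obstacle here: the argument is a routine unwinding of the two definitions, and the only point requiring a modicum of care is that the shift sends the coordinate at $h$ to the coordinate at $g^{-1}h$ \emph{simultaneously in every argument} $x_1, \dots, x_n$, which is exactly what makes it compatible with the coordinatewise operation $f^{A^G}$. Since $g \in G$ was arbitrary, this shows the shift action of $G$ on $A^G$ is by homomorphisms, i.e. $A^G$ is a $G$-algebra.
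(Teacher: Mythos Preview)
Your proof is correct and follows essentially the same direct verification as the paper: evaluate both sides of the homomorphism identity at an arbitrary $h \in G$ and unwind the definitions of the shift action and the componentwise operation. The only difference is that the paper singles out the nullary case $n=0$ (distinguished elements) and checks explicitly that the constant configuration $f^{A^G}$ is fixed by every shift; your general computation still covers this degenerate case if read with an empty list of arguments, but it would not hurt to mention it.
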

\begin{proof}
Let $n \geq 1$ and observe that for all $n$-ary $ f \in \mathcal{F}$, $x_1, \dots, x_n \in A^G$, $g,h \in G$,
\begin{align*}
f^{A^G}( \varphi_g( x_1), \dots, \varphi_g( x_n))(h) & = f^A( g \cdot x_1(h), \dots, g \cdot x_n(h)) \\
& =   f^A( x_1(g^{-1}h), \dots, x_n(g^{-1}h)) \\
& = f^{A^G}( x_1, \dots, x_n) (g^{-1}h)  \\
& = g \cdot f^{A^G}( x_1, \dots, x_n) (h). 
\end{align*}
Thus, 
\[f^{A^G}( \varphi_g( x_1), \dots, \varphi_g( x_n))= (\varphi_g \circ  f^{A^G})( x_1, \dots, x_n), \]
which proves that $\varphi_g$ is an endomorphism of $A^G$. 

When $n=0$, a distinguished element $e_{A^G} \in A^G$ induced componentwise from a distinguished element $e_{A} \in A$ is defined by $e_{A^G}(g) = e_{A} \in A$, for all $g \in G$. Hence, for all $g,h \in G$, we have
\[ g \cdot e_{A^G} (h) = e_{A^G} (g^{-1}h) = e_A = e_{A^G}(h). \]
Thus, $\varphi_g(e_{A^G}) = e_{A^G}$ for all $g \in G$. 
\end{proof}

\begin{lemma}\label{subalgebra}
The set $\CA(G;A)$ is a subalgebra of $(A^G)^{A^G}$.
\end{lemma}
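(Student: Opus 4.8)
The plan is to regard $\CA(G;A)$ as a subset of the algebra $(A^G)^{\,A^G}$ of all self-maps of $A^G$, which carries the operations of type $\mathcal{F}$ defined pointwise: for $n$-ary $f \in \mathcal{F}$ and $\sigma_1, \dots, \sigma_n : A^G \to A^G$, one sets $f^{(A^G)^{A^G}}(\sigma_1,\dots,\sigma_n)(x) = f^{A^G}(\sigma_1(x),\dots,\sigma_n(x))$. To prove $\CA(G;A)$ is a subalgebra, I must check it is non-empty and closed under every $f \in \mathcal{F}$, including the nullary ones. Non-emptiness is immediate, as the identity map is a cellular automaton. The main tool for closure is the fact, already noted in Section \ref{sec:CA}, that any superset of a memory set is again a memory set: hence finitely many cellular automata $\tau_1, \dots, \tau_n$, with memory sets $S_1, \dots, S_n$, admit the common finite memory set $S := S_1 \cup \dots \cup S_n$, with respective local functions $\mu_i : A^S \to A$.

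For $n$-ary $f \in \mathcal{F}$ with $n \geq 1$ and cellular automata $\tau_1, \dots, \tau_n$, put $\sigma := f^{(A^G)^{A^G}}(\tau_1, \dots, \tau_n)$ and define $\mu : A^S \to A$ by $\mu(y) := f^A(\mu_1(y), \dots, \mu_n(y))$. Then I would compute, for all $x \in A^G$ and $g \in G$,
\[ \sigma(x)(g) = f^A\bigl(\tau_1(x)(g), \dots, \tau_n(x)(g)\bigr) = f^A\bigl(\mu_1((g^{-1}\cdot x)\vert_S), \dots, \mu_n((g^{-1}\cdot x)\vert_S)\bigr) = \mu\bigl((g^{-1}\cdot x)\vert_S\bigr), \]
where the first equality is the componentwise definition of $f^{A^G}$, the second is the defining property of each $\tau_i$ with respect to $S$ and $\mu_i$, and the third is the definition of $\mu$. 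This exhibits $\sigma$ as the cellular automaton with memory set $S$ and local function $\mu$, so $\sigma \in \CA(G;A)$.

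Finally I would dispose of the nullary operations, the only slightly delicate point since a subalgebra must contain all distinguished elements. For $0$-ary $f \in \mathcal{F}$, the element $f^{(A^G)^{A^G}}$ is the constant map taking every configuration to the constant configuration $f^{A^G} \in A^G$ (value $f^A$ at every $g$); choosing $S = \{e\}$ and the constant local function $\mu : A^{S} \to A$ with value $f^A$ produces precisely this map, so it lies in $\CA(G;A)$. I do not expect a genuine obstacle here: the argument is essentially bookkeeping, and the only things one must not skip are passing to a common memory set before combining the local functions, and checking the nullary case.
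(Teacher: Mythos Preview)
Your proposal is correct and follows essentially the same approach as the paper: pass to the common memory set $S=\bigcup_i S_i$ and define the local function of $f(\tau_1,\dots,\tau_n)$ by applying $f^A$ to the individual local functions. The only cosmetic difference is that you first extend each $\mu_i$ to $A^S$ and then set $\mu(y)=f^A(\mu_1(y),\dots,\mu_n(y))$, whereas the paper keeps the original domains and writes $\mu(y)=f^A(\mu_1(y\vert_{S_1}),\dots,\mu_n(y\vert_{S_n}))$; your explicit treatment of non-emptiness and of the nullary case is more careful than the paper's own proof, which leaves these implicit.
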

\begin{proof}
Let $X :=(A^G)^{A^G}$. We must show that for any $n$-ary $f \in \mathcal{F}$ and $\tau_1, \dots, \tau_n \in \CA(G;A)$ we have $f^{X}(\tau_1, \dots, \tau_n) \in \CA(G;A)$. Let $S_i$ and $\mu_i : A^{S_i} \to A$ be the memory set and local function of $\tau_i$, respectively, for $i=1,\dots, n$. Define $S:=\bigcup_{i=1}^n S_i$ and $\mu : A^S \to A$ by 
\[ \mu(y) := f^A( \mu_1( y \vert_{S_1} ), \dots, \mu_n( y \vert_{S_n} ) ), \ \ \ \forall y \in A^S.   \]
Then, for all $x\in A^G, g \in G$,
\begin{align*}
f^{X}(\tau_1, \dots, \tau_n)(x)(g) & = f^{A^G} (\tau_1(x), \dots, \tau_n(x))(g) \\
& = f^A( \tau_1(x)(g), \dots, \tau_n(x)(g)) \\
& = f^A( \mu_1( (g^{-1} \cdot x) \vert_{S_1} ), \dots, \mu_n( (g^{-1} \cdot x) \vert_{S_n} ) )\\ 
& = \mu( (g^{-1} \cdot x) \vert_{S} ). 
 \end{align*}
This shows that $f^{X}(\tau_1, \dots, \tau_n)$ is a cellular automaton with memory set $S$ and local function $\mu : A^S \to A$. 
\end{proof}

For any $S \subset G$, define
\[ \CA(G,S; A) := \{ \tau : A^G \to A^G \; \vert \; \tau \text{ is a cellular automaton with memory set contained in $S$} \}. \]
This is not a monoid under composition, as the memory set of $\tau \circ \sigma$, for $\tau, \sigma \in \CA(G,S; A)$, may not be contained in $S$. However, a similar argument as in Lemma \ref{subalgebra} shows that $\CA(G,S; A)$ is a subalgebra of $\CA(G;A)$.  

Naturally, we are interested on cellular automata that preserve the algebraic structure of $A^G$. Define 
\begin{align*}
\EndCA(G,S;A) &:=  \End(A^G) \cap \CA(G,S;A), \\
\EndCA(G;A) &:= \End(A^G) \cap \CA(G:A). 
\end{align*}
We call the elements of $\EndCA(G;A)$ \emph{endomorphic cellular automata} over $A^G$. 

\begin{lemma}
If $A$ is an entropic algebra, then both $\EndCA(G,S;A)$ and $\EndCA(G;A)$ are subalgebras of $\CA(G:A)$ 
\end{lemma}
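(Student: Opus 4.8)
The plan is to show that $\EndCA(G,S;A)$ (and then $\EndCA(G;A)$) is closed under every operation $f \in \mathcal{F}$; since $\CA(G,S;A)$ is already a subalgebra of $\CA(G;A)$ by the remark following Lemma \ref{subalgebra}, and $\End(A^G)$ is an algebra of the same type by Lemma \ref{entropic}(3) applied to $X = A^G$ and the entropic algebra $A$, it suffices to verify that the operation $f^X$ (computed in $X := (A^G)^{A^G}$, equivalently in $\End(A^G)$) of endomorphic cellular automata is again endomorphic \emph{and} again a CA with memory set in $S$. The second half is immediate from the proof of Lemma \ref{subalgebra}: if $\tau_1, \dots, \tau_n$ have memory sets contained in $S$, then so does $f^X(\tau_1, \dots, \tau_n)$, since the constructed memory set is $\bigcup S_i \subseteq S$.

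For the first half I would argue as follows. Take $n$-ary $f \in \mathcal{F}$ and $\tau_1, \dots, \tau_n \in \EndCA(G,S;A)$, so each $\tau_i \in \End(A^G)$. I must show $\sigma := f^{A^G}(\tau_1, \dots, \tau_n)$, defined pointwise by $\sigma(x) = f^{A^G}(\tau_1(x), \dots, \tau_n(x))$, lies in $\End(A^G)$. This is precisely the statement that $\Hom(A^G, A^G)$ is closed under the operation $f^{\Hom(A^G,A^G)}$, which is exactly the content of Lemma \ref{entropic}(3) with $X = A^G$: since $A$ is entropic, $\Hom(A^G, A)$ is an algebra of type $\mathcal{F}$, and the same reasoning componentwise (or directly, since $A^G$ is entropic whenever $A$ is, as entropy is preserved under direct products) gives that $\Hom(A^G, A^G) = \End(A^G)$ is an algebra of type $\mathcal{F}$ under the pointwise-defined operations. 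Concretely, to check $\sigma \circ g^{A^G} = g^{A^G}$-compatibility for another $m$-ary $g \in \mathcal{F}$, one expands both sides using the definitions and invokes the interchange law of Lemma \ref{entropic}(2) for $A$ applied componentwise on $A^G$; this is the routine calculation I would not spell out in full.

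Finally, for $\EndCA(G;A) = \End(A^G) \cap \CA(G;A)$ without the fixed $S$, I would reduce to the bounded case: given $\tau_1, \dots, \tau_n \in \EndCA(G;A)$ with memory sets $S_1, \dots, S_n$, set $S := \bigcup_{i=1}^n S_i$, a finite subset of $G$; then all $\tau_i \in \EndCA(G,S;A)$, so by the bounded case $f^{A^G}(\tau_1, \dots, \tau_n) \in \EndCA(G,S;A) \subseteq \EndCA(G;A)$. The nullary case ($n=0$) needs a separate remark: the constant map $A^G \to A^G$ at the distinguished element $f^{A^G} \in A^G$ is a CA with memory set $\emptyset$ (or any singleton), and it is an endomorphism precisely because $f^{A^G}$ is itself a subalgebra-generating constant — more carefully, the constant map at any element fixed by all operations is a homomorphism, and $f^{A^G}$ qualifies since $A^G$ is entropic so its distinguished elements form a subalgebra; alternatively one checks directly that $x \mapsto f^{A^G}$ commutes with every operation using entropy of $A$. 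I expect the main obstacle to be purely expository: making precise that "entropic" transfers from $A$ to $A^G$ and hence that $\End(A^G)$ genuinely is an algebra of type $\mathcal{F}$, so that the intersection $\End(A^G) \cap \CA(G,S;A)$ is an intersection of two subalgebras of the common overalgebra $(A^G)^{A^G}$ and is therefore itself a subalgebra; once that framing is set up, everything else is a direct application of Lemmas \ref{entropic} and \ref{subalgebra}.
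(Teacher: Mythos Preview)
Your proposal is correct and follows essentially the same approach as the paper: both recognize that $\End(A^G)$ is a subalgebra of $(A^G)^{A^G}$ via Lemma \ref{entropic} (since $A$, and hence the direct power $A^G$, is entropic), and then conclude by intersecting with the subalgebras $\CA(G,S;A)$ and $\CA(G;A)$ coming from Lemma \ref{subalgebra}. The paper's proof is a two-line version of exactly this argument; your extra care about why $A^G$ inherits entropy and about the nullary case fills in details the paper leaves implicit.
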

\begin{proof}
By Lemma \ref{entropic}, $\End(A^G)$ is an algebra with the induced componentwise operations. Hence, being intersections of algebras, both $\EndCA(G,S;A)$ and $\EndCA(G;A)$ are algebras.
\end{proof}

\begin{example}\label{ex-sum}
Suppose that $A$ is an abelian group with operation $+$, and consider a finite subset $S \subseteq G$. Define $\mu : A^S \to A$ by
\[ \mu(z) = \sum_{s \in S} z(s), \quad \forall z \in A^S.   \] 

The cellular automaton $\tau : A^G \to A^G$ defined by the local function $\mu :A^S \to A$ preserves the addition of $A^G$. Indeed, for all $x,y \in A^G$, $g \in G$,
\begin{align*}
 \tau(x + y)(g) & = \mu( (g^{-1} \cdot (x + y) \vert_{S} )  \\[.7em]
& = \sum_{s \in S} g^{-1} \cdot (x+y) (s)  \\[.7em]
& = \sum_{s \in S} g^{-1} \cdot x (s) + \sum_{s \in S} g^{-1} \cdot y (s)  \\[.7em]
& = \tau(x)(g) + \tau(y)(g).
\end{align*}
Therefore, $\tau(x+y) = \tau(x) + \tau(y)$, for all $x,y \in A^G$, so $\tau \in \EndCA(G;A)$. 
\end{example}

\begin{example}
If we consider $A=\{0,1\}$ as an abelian group with addition modulo 2, then the elementary cellular automaton $\tau : A^{\mathbb{Z}} \to A^{\mathbb{Z}}$ known as Rule 110 (defined in Example \ref{rule110}) does not preserve the addition of $A^{\mathbb{Z}}$. This may be verified, for example, by defining $x, y \in A^{\mathbb{Z}}$ as 
\[ x = (\dots, 0,0, \cdot 1, 0,0, \dots)  \quad \text{and} \quad y = (\dots, 0, 0, \cdot 0 , 1, 0 ,\dots), \]
and noting that $\tau(x) + \tau(y) \neq \tau(x+y)$. 
\end{example}

\begin{example}
Let $A$ be any algebra and fix $\phi \in \End(A)$. We claim that the map $\tau : A^G \to A^G$ defined by
\[ \tau(x) = \phi \circ x, \quad \forall x \in A^G \]
is an endomorphic cellular automaton over $A^G$. By Example \ref{Ex2}, this is a cellular automaton with memory set $S = \{ e \}$ and local function $\mu : A^{S} \to A$ given by $\mu(y) = \phi(y(e))$, $\forall y \in A^S$. Now we check that $\tau$ is an endomorphism of $A^G$: for every $n$-ary $f \in \mathcal{F}$, $x_1, x_2, \dots, x_n \in A^G$, and $g \in G$, we have
\begin{align*}
f^{A^G}( \tau(x_1) , \tau(x_2), \dots, \tau(x_n)) (g) & =f^{A^G}( \phi \circ x_1  , \phi \circ x_2, \dots,  \phi \circ x_n)(g)    \ \\
 & =  f^A( \phi \circ x_1(g) , \phi \circ x_2(g), \dots,  \phi \circ x_n(g)) \\
 & = \phi \circ f^A( x_1(g) , x_2(g), \dots,  x_n(g)) \\
 & = (\tau \circ f^{A^G})(x_1, x_2, \dots, x_n) (g).
 \end{align*}
This generalises Example 8.1.3. (b) in \cite{CSC10} which was given for linear cellular automata. 
\end{example}

\begin{example}
Let $A$ be any algebra and fix $g_0 \in G$. Consider the bijection $R_{g_0}$ of $G$ defined by the right multiplication by $g_0$, i.e. $R_{g_0} : G \to G$ is the map defined by $R_{g_0}(g) = gg_0$, for all $g \in G$. We claim that the map $\tau : A^G \to A^G$ defined by
\[ \tau(x) =  x \circ R_{g_0}, \quad \forall x \in A^G \]
is an endomorphic cellular automaton over $A^G$. Note that $\tau$ is a cellular automaton with memory set $S = \{ g_0 \}$ and local function $\mu : A^{S} \to A$ given by $\mu(y) = y(g_0)$, $\forall y \in A^S$. We check that $\tau$ is an endomorphism of $A^G$: for every $n$-ary $f \in \mathcal{F}$, $x_1, x_2, \dots, x_n \in A^G$, and $g \in G$, 
\begin{align*}
f^{A^G}( \tau(x_1) , \tau(x_2), \dots, \tau(x_n)) (g) & = f^A( x_1\circ R_{g_0}(g) , x_2\circ R_{g_0}(g), \dots,  x_n\circ R_{g_0}(g)) \\
 & = f^A( x_1(gg_0) , x_2(gg_0), \dots,  x_n(gg_0)) \\
 & = f^{A^G}(x_1, x_2, \dots, x_n)(gg_0) \\ 
 & = f^{A^G}(x_1, x_2, \dots, x_n) \circ R_{g_0} (g) \\
 & = (\tau \circ f^{A^G})(x_1, x_2, \dots, x_n)(g) 
 \end{align*}
This generalises Example 8.1.3. (c) in \cite{CSC10} which was given for linear cellular automata. 
\end{example}

The following two lemmas are technical, but they are required to show the main result of this section. 

\begin{lemma}\label{le:restriction}
Consider an $n$-ary operation $f^A : A^n \to A$ and $S \subseteq G$. For any $x_1, \dots, x_n \in A^G$, we have
\[ f^{A^G} (x_1, \dots, x_n) \vert_S = f^{A^S} (x_1\vert_S, \dots, x_n \vert_S). \]
\end{lemma}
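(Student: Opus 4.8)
The plan is to unwind both sides of the claimed equality on an arbitrary element $g \in S$ and check they agree, using only the componentwise definitions of $f^{A^G}$ and $f^{A^S}$. Since two functions with domain $S$ are equal precisely when they agree at every point of $S$, this pointwise verification suffices.

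First I would fix $g \in S$ and compute the left-hand side: by definition of the restriction map,
\[
\bigl( f^{A^G}(x_1, \dots, x_n) \vert_S \bigr)(g) = f^{A^G}(x_1, \dots, x_n)(g),
\]
and then, by the componentwise definition of $f^{A^G}$,
\[
f^{A^G}(x_1, \dots, x_n)(g) = f^A\bigl( x_1(g), \dots, x_n(g) \bigr).
\]
Next I would compute the right-hand side at $g$: by the componentwise definition of $f^{A^S}$ (which makes sense precisely because $g \in S$),
\[
f^{A^S}(x_1\vert_S, \dots, x_n\vert_S)(g) = f^A\bigl( x_1\vert_S(g), \dots, x_n\vert_S(g) \bigr) = f^A\bigl( x_1(g), \dots, x_n(g) \bigr),
\]
where the last equality holds because $x_i\vert_S(g) = x_i(g)$ for $g \in S$. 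Comparing the two displays gives equality at $g$, and since $g \in S$ was arbitrary, the two restricted configurations coincide.

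There is really no obstacle here: the statement is a formal consequence of the definitions, and the only thing to be careful about is the $n = 0$ case. When $n = 0$, the operation $f$ is a distinguished element, $f^{A^G}$ is the constant configuration on $G$ with value $f \in A$, and $f^{A^S}$ is the constant configuration on $S$ with value $f$; restricting the former to $S$ clearly yields the latter, so the claim holds in this degenerate case as well. I would mention this explicitly for completeness but not dwell on it.
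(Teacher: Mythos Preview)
Your proof is correct and follows essentially the same pointwise argument as the paper: evaluate both sides at an arbitrary element of $S$ using the componentwise definitions and observe they coincide. Your explicit treatment of the $n=0$ case is a nice touch that the paper omits.
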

\begin{proof}
The above equality compares two elements of $A^S$. Note that, for every $s \in S$, the left-hand-side gives us 
\[f^{A^G} (x_1, \dots, x_n) \vert_S(s) = f^A(x_1(s), \dots, x_n(s)). \]
On the other hand, for every $s \in S$, the right-hand-side gives us
\[  f^{A^S} (x_1\vert_S, \dots, x_n \vert_S)(s) = f^A(x_1(s), \dots, x_n(s)). \]
The result follows. 
\end{proof}

For any $S \subseteq G$ and $y \in A^S$, denote by $\overline{y}$ an element of $A^G$ such that $\overline{y} \vert_S = y$.  

\begin{lemma}\label{bar}
Let $S \subseteq G$. For any $n$-ary $f \in \mathcal{F}$ and $y_1, \dots, y_n \in A^S$, we have
\[   \overline{f^{A^S}(y_1, \dots, y_n)} \vert_S = f^{A^G}(\overline{y_1}, \dots, \overline{y_n} ) \vert_S. \]
\end{lemma}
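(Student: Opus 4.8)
The plan is to show that both sides of the claimed identity are in fact equal to the single element $f^{A^S}(y_1, \dots, y_n) \in A^S$, so that the equality is immediate. The only inputs needed are the defining property of the bar notation, namely $\overline{z}\vert_S = z$ for every $z \in A^S$, together with Lemma \ref{le:restriction}.

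First I would handle the left-hand side. Setting $z := f^{A^S}(y_1, \dots, y_n)$, which is an element of $A^S$, the construction of $\overline{z}$ guarantees $\overline{z}\vert_S = z$. Hence
\[ \overline{f^{A^S}(y_1, \dots, y_n)}\,\vert_S = f^{A^S}(y_1, \dots, y_n). \]

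Next I would handle the right-hand side. The configurations $\overline{y_1}, \dots, \overline{y_n}$ lie in $A^G$, so Lemma \ref{le:restriction} applies and gives
\[ f^{A^G}(\overline{y_1}, \dots, \overline{y_n})\,\vert_S = f^{A^S}\big(\overline{y_1}\vert_S, \dots, \overline{y_n}\vert_S\big). \]
Since $\overline{y_i}\vert_S = y_i$ for each $i$ by definition of the bar notation, the right-hand side of this equals $f^{A^S}(y_1, \dots, y_n)$. Comparing with the computation for the left-hand side, both expressions coincide, which proves the lemma.

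I do not anticipate a genuine obstacle here: the statement is a bookkeeping identity whose content is entirely absorbed by Lemma \ref{le:restriction} and the fact that restricting an extension recovers the original function. The one point to be careful about is that $\overline{f^{A^S}(y_1,\dots,y_n)}$ and the $\overline{y_i}$ are only specified up to their values outside $S$, so the argument must only ever use their restrictions to $S$ — which it does — rather than any particular choice of extension.
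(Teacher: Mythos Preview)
Your proof is correct and essentially the same as the paper's: the paper verifies the equality pointwise at each $s\in S$ by unfolding the componentwise definitions of $f^{A^S}$ and $f^{A^G}$, while you package the right-hand-side computation as an application of Lemma~\ref{le:restriction} (whose content is exactly that pointwise unfolding) and then use $\overline{y_i}\vert_S=y_i$. The underlying argument is identical.
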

\begin{proof}
For every $s \in S$ we have
\begin{align*}
\overline{f^{A^S}(y_1, \dots, y_n)}(s) & =  f^{A^S}(y_1, \dots, y_n) (s) \\
&  f^{A}(y_1(s), \dots, y_n(s) ) \\
& f^{A}( \overline{y_1}(s), \dots, \overline{y_n} (s) )  \\
& f^{A^G} ( \overline{y_1}, \dots, \overline{y_n})(s).  
\end{align*}
\end{proof}

The following result shows that a cellular automaton is an endomorphism of $A^G$ if and only if its local function is a homomorphism. This is a significant generalisation of \cite[Proposition 8.1.1.]{CSC10}, which was proved for linear cellular automata. 

\begin{theorem}\label{th:local}
Let $\tau : A^G \to A^G$ be a cellular automaton with memory set $S \subseteq G$ and local function $\mu :A^S \to A$. Then $\tau \in \End(A^G)$ if and only if $\mu \in \Hom(A^S, A)$. 
\end{theorem}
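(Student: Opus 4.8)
The plan is to prove both implications directly from the defining relation $\tau(x)(g) = \mu((g^{-1}\cdot x)\vert_S)$, using Remark~\ref{re:local} to recover $\mu$ from $\tau$, together with the two technical lemmas (Lemma~\ref{le:restriction} and Lemma~\ref{bar}) and the fact from Theorem~\ref{th:action} that the shift acts by homomorphisms.

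For the forward direction, assume $\tau \in \End(A^G)$. Fix an $n$-ary $f \in \mathcal{F}$ and $y_1, \dots, y_n \in A^S$. Using Remark~\ref{re:local}, I would write $\mu(f^{A^S}(y_1,\dots,y_n)) = \tau\bigl(\overline{f^{A^S}(y_1,\dots,y_n)}\bigr)(e)$. The goal is to massage the argument $\overline{f^{A^S}(y_1,\dots,y_n)}$: what $\tau$ "sees" at $e$ is only the restriction to $S$, and by Lemma~\ref{bar} that restriction equals $f^{A^G}(\overline{y_1},\dots,\overline{y_n})\vert_S$. Since two configurations agreeing on a memory set produce the same value of $\tau$ at $e$ (this is the content of Remark~\ref{re:local}, or follows directly from the defining formula with $g=e$), I get $\mu(f^{A^S}(y_1,\dots,y_n)) = \tau\bigl(f^{A^G}(\overline{y_1},\dots,\overline{y_n})\bigr)(e)$. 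Now apply that $\tau$ is an endomorphism of $A^G$: this equals $f^{A^G}(\tau(\overline{y_1}),\dots,\tau(\overline{y_n}))(e) = f^A(\tau(\overline{y_1})(e),\dots,\tau(\overline{y_n})(e)) = f^A(\mu(y_1),\dots,\mu(y_n))$, again by Remark~\ref{re:local}. This is exactly the homomorphism condition for $\mu$.

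For the converse, assume $\mu \in \Hom(A^S, A)$. Fix $f \in \mathcal{F}$ of arity $n$, configurations $x_1,\dots,x_n \in A^G$, and $g \in G$. I would compute $\tau(f^{A^G}(x_1,\dots,x_n))(g) = \mu\bigl((g^{-1}\cdot f^{A^G}(x_1,\dots,x_n))\vert_S\bigr)$. Here Theorem~\ref{th:action} lets me pull the shift through the operation: $g^{-1}\cdot f^{A^G}(x_1,\dots,x_n) = f^{A^G}(g^{-1}\cdot x_1,\dots,g^{-1}\cdot x_n)$. Then Lemma~\ref{le:restriction} pushes the restriction inside: $(f^{A^G}(g^{-1}\cdot x_1,\dots))\vert_S = f^{A^S}((g^{-1}\cdot x_1)\vert_S,\dots)$. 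Applying that $\mu$ is a homomorphism gives $f^A(\mu((g^{-1}\cdot x_1)\vert_S),\dots,\mu((g^{-1}\cdot x_n)\vert_S)) = f^A(\tau(x_1)(g),\dots,\tau(x_n)(g)) = f^{A^G}(\tau(x_1),\dots,\tau(x_n))(g)$. Since $g$ was arbitrary, $\tau(f^{A^G}(x_1,\dots,x_n)) = f^{A^G}(\tau(x_1),\dots,\tau(x_n))$, so $\tau \in \End(A^G)$. One should also handle the nullary case ($n=0$) separately in both directions: a distinguished element $f \in \mathcal F$ gives the constant configuration, and one checks $\tau$ maps it to the constant configuration with value $\mu(f^{A^S}) = f^A$ iff $\mu(f^{A^S}) = f^A$, which is the nullary homomorphism condition.

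I do not expect a serious obstacle here; the proof is a bookkeeping exercise in chaining the defining formula with the three cited results. The one point requiring slight care is the forward direction, where one must justify replacing the argument of $\tau$ by another configuration with the same restriction to $S$ — this is legitimate precisely because $S$ is a memory set, and is cleanly encapsulated by Remark~\ref{re:local}. The other mild subtlety is being careful that $\overline{y}$ is only well-defined up to its values outside $S$, so every appeal to it must be mediated by a restriction-to-$S$; Lemma~\ref{bar} is designed exactly for this, so invoking it at the right moment keeps the argument rigorous.
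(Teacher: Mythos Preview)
Your proposal is correct and follows essentially the same route as the paper: the forward direction uses Remark~\ref{re:local} and Lemma~\ref{bar} to replace the argument of $\tau$ by $f^{A^G}(\overline{y_1},\dots,\overline{y_n})$ and then applies the endomorphism hypothesis, while the converse chains the defining formula with Theorem~\ref{th:action}, Lemma~\ref{le:restriction}, and the homomorphism condition on $\mu$. Your explicit remark on the nullary case is a nice addition the paper leaves implicit.
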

\begin{proof}
Suppose that $\tau \in \End(A^G)$. Recall (see Remark \ref{re:local}) that $\mu(y) = \tau(\overline{y})(e)$ for every $y \in A^S$. Then, for any $n$-ary $f \in \mathcal{F}$ and $y_1, \dots y_n \in A^S$ we have
\begin{equation} \label{eq1}
  \mu( f^{A^S}(y_1, \dots, y_n)) = \tau( \overline{f^{A^S}(y_1, \dots, y_n)}) (e). 
  \end{equation}
It follows from the definition of cellular automata that if $x_1, x_2 \in A^G$ are two configurations such that $x_1 \vert_S = x_2 \vert_S$, then $\tau(x_1)(e) = \tau(x_2)(e)$. Hence, by Lemma \ref{bar} we have
\begin{equation}\label{eq2}
\tau( \overline{f^{A^S}(y_1, \dots y_n)}) (e) = \tau( f^{A^G}(\overline{y_1}, \dots, \overline{y_n} ) ) (e). 
\end{equation}
Combining (\ref{eq1}) and (\ref{eq2}), and using the fact that $\tau \in \End(A^G)$, we obtain 
\begin{align*}
 \mu( f^{A^S}(y_1, \dots y_n)) & = \tau( f^{A^G}(\overline{y_1}, \dots, \overline{y_n} ) ) (e) \\
 & = f^{A^G}( \tau(\overline{y_1}), \dots, \tau(\overline{y_n}) ) (e) \\
 & = f^A ( \tau(\overline{y_1})(e), \dots, \tau(\overline{y_n})(e) ) \\
 & = f^A( \mu(y_1), \dots, \mu(y_n)).
\end{align*}
Therefore, $\mu \in \Hom(A^S, A)$. 

Suppose now that $\mu \in\Hom(A^S, A)$. For any $n$-ary $f \in \mathcal{F}$, $x_1, \dots, x_n \in A^G$, and $g \in G$, we apply the definition of cellular automaton 
\[ \tau \left( f^{A^G} (x_1, \dots, x_n) \right) (g)  = \mu( (g^{-1} \cdot   f^{A^G} (x_1, \dots, x_n)) \vert_S ). \]

By Theorem \ref{th:action} and Lemma \ref{le:restriction},
\begin{align*}
\mu( (g^{-1} \cdot   f^{A^G} (x_1, \dots, x_n)) \vert_S ) & =  \mu( (f^{A^G} (g^{-1} \cdot x_1, \dots, g^{-1} \cdot x_n)) \vert_S ) \\
& = \mu ( f^{A^S} ( (g^{-1} \cdot x_1)\vert_S , \dots, (g^{-1} \cdot x_n)\vert_S )) \\
& = f^A ( \mu((g^{-1} \cdot x_1)\vert_S ), \dots, \mu((g^{-1} \cdot x_n)\vert_S) ) \\
& = f^A ( \tau(x_1)(g), \dots, \tau(x_n)(g)) \\
& = f^{A^G}( \tau(x_1), \dots, \tau(x_n))(g).
\end{align*}

Hence $\tau  \in \End(A^G)$. 
\end{proof}

\begin{corollary}\label{cor-hom}
The number of endomorphic cellular automata admitting a memory set $S \subseteq G$ is $\vert \Hom(A^S , A) \vert$. 
\end{corollary}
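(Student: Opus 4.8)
The plan is to identify $\EndCA(G,S;A)$ with $\Hom(A^S,A)$ via an explicit bijection and then just compare cardinalities. First I would record that a cellular automaton $\tau$ admitting $S$ as a memory set is nothing more than its local function: the defining identity $\tau(x)(g) = \mu((g^{-1}\cdot x)|_S)$ shows that $\tau$ is uniquely determined by $\mu : A^S \to A$, and conversely every $\mu \in A^{A^S}$ yields such a $\tau$. Thus the assignment $\tau \mapsto \mu$ is a bijection between $\CA(G,S;A)$ (the cellular automata admitting memory set $S$, by the superset remark this is the same as ``memory set contained in $S$'') and the full function set $A^{A^S}$, with inverse described in Remark \ref{re:local} by $\mu(y) = \tau(\overline{y})(e)$.

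Next I would restrict this bijection to the endomorphic cellular automata. By definition $\EndCA(G,S;A) = \End(A^G) \cap \CA(G,S;A)$ is precisely the set of endomorphic cellular automata admitting memory set $S$. Theorem \ref{th:local} says that, under the correspondence $\tau \leftrightarrow \mu$ above, $\tau \in \End(A^G)$ holds if and only if $\mu \in \Hom(A^S,A)$. Hence the bijection $\CA(G,S;A) \to A^{A^S}$ restricts to a bijection $\EndCA(G,S;A) \to \Hom(A^S,A)$, and taking cardinalities yields $|\EndCA(G,S;A)| = |\Hom(A^S,A)|$, which is the assertion.

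There is essentially no obstacle here; the only point requiring a word of care is the well-definedness of $\tau \mapsto \mu$, i.e. that $\mu(y) = \tau(\overline{y})(e)$ is independent of the chosen extension $\overline{y}$ of $y$. This is immediate from the defining identity of a cellular automaton with $g = e$: $\tau(\overline{y})(e) = \mu((e^{-1}\cdot \overline{y})|_S) = \mu(\overline{y}|_S) = \mu(y)$, so any two extensions of $y$ give the same value. With that in hand, the corollary is just the combination of Theorem \ref{th:local} with the elementary fact that a cellular automaton with a fixed memory set $S$ is the same datum as its local function on $A^S$.
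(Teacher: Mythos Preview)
Your argument is correct and is precisely the intended one: the paper states this as an immediate corollary of Theorem~\ref{th:local} with no further proof, and what you have written simply spells out the bijection $\tau \leftrightarrow \mu$ between $\CA(G,S;A)$ and $A^{A^S}$ and observes that Theorem~\ref{th:local} identifies the endomorphic subset with $\Hom(A^S,A)$. Your remark on the well-definedness of $\mu(y)=\tau(\overline{y})(e)$ is a nice touch but is already implicit in Remark~\ref{re:local}.
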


\begin{theorem}\label{th:CA-S}
Let $A$ be an entropic algebra. Then,
\[  \EndCA(G,S;A) \cong \Hom(A^S, A). \]
\end{theorem}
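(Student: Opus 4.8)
The plan is to exhibit an explicit bijection $\Phi : \EndCA(G,S;A) \to \Hom(A^S, A)$ and then verify that it respects the operations of $\mathcal{F}$, so that it is in fact an isomorphism of algebras (recall that $\EndCA(G,S;A)$ is an algebra by the lemma following the definition of $\EndCA$, and $\Hom(A^S,A)$ is an algebra of type $\mathcal{F}$ by Lemma \ref{entropic}(3), using that $A$ is entropic). The natural candidate is the map sending a cellular automaton to its local function: $\Phi(\tau) := \mu_\tau$, where $\mu_\tau(y) = \tau(\overline{y})(e)$ for $y \in A^S$, as in Remark \ref{re:local}. Theorem \ref{th:local} already tells us that $\Phi$ is well defined, i.e. that $\mu_\tau \in \Hom(A^S,A)$ whenever $\tau \in \End(A^G)$, and conversely that every $\mu \in \Hom(A^S,A)$ is the local function of \emph{some} endomorphic cellular automaton with memory set $S$. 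So surjectivity is essentially immediate from Theorem \ref{th:local}.

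For injectivity I would argue as follows: if $\tau_1, \tau_2 \in \EndCA(G,S;A)$ have the same local function $\mu$, then for every $x \in A^G$ and $g \in G$ we have $\tau_1(x)(g) = \mu((g^{-1}\cdot x)\vert_S) = \tau_2(x)(g)$ directly from the defining formula of a cellular automaton, hence $\tau_1 = \tau_2$. (This uses that both $\tau_i$ admit $S$ as a memory set, which is exactly the hypothesis $\tau_i \in \CA(G,S;A)$.) Thus $\Phi$ is a bijection. It remains to check that $\Phi$ is a homomorphism of algebras of type $\mathcal{F}$, i.e. that for every $n$-ary $f \in \mathcal{F}$ and $\tau_1,\dots,\tau_n \in \EndCA(G,S;A)$,
\[ \Phi\bigl( f^{\EndCA(G,S;A)}(\tau_1,\dots,\tau_n) \bigr) = f^{\Hom(A^S,A)}\bigl( \Phi(\tau_1), \dots, \Phi(\tau_n) \bigr). \]
The left-hand side is the local function of $f^{A^G}(\tau_1,\dots,\tau_n)$ (the operation on $\CA(G;A)$ being the pointwise one inherited from $(A^G)^{A^G}$), which by the computation in the proof of Lemma \ref{subalgebra} has local function $y \mapsto f^A(\mu_1(y\vert_S),\dots,\mu_n(y\vert_S))$; but since here all $S_i$ equal $S$, this is just $y \mapsto f^A(\mu_1(y),\dots,\mu_n(y))$. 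The right-hand side, by the defining formula in Lemma \ref{entropic}(3), is precisely the map $y \mapsto f^A(\mu_1(y),\dots,\mu_n(y))$. So the two sides agree, and $\Phi$ is an isomorphism.

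I do not expect a serious obstacle here; the work has been front-loaded into Theorem \ref{th:local} and Lemma \ref{subalgebra}. The one point requiring a little care is bookkeeping around memory sets: the operations on $\CA(G;A)$ a priori produce a cellular automaton whose \emph{guaranteed} memory set is $\bigcup_i S_i$, so one must note that when all inputs already have memory set $S$ the resulting local function is genuinely a function on $A^S$ (not merely on some superset), so that it lands in $\Hom(A^S,A)$ and matches the operation defined on $\Hom(A^S,A)$ on the nose. A second minor subtlety is the nullary case $n = 0$: there $f^{A^G}$ is the constant configuration $g \mapsto f^A$, its restriction to $S$ is the constant function with value $f^A$, and $f^{\Hom(A^S,A)}$ is the constant homomorphism with the same value, so the two again agree; it is worth a sentence to confirm this constant map is indeed a homomorphism, which holds because $A$ is entropic. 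Once these remarks are in place the isomorphism follows.
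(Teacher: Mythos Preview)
Your proposal is correct and follows essentially the same route as the paper: define $\Phi$ by sending $\tau$ to its local function, invoke Theorem~\ref{th:local} for well-definedness and bijectivity, and verify the homomorphism property by unwinding the pointwise operation on both sides. Your version is in fact slightly more careful than the paper's in spelling out injectivity, the memory-set bookkeeping, and the nullary case.
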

\begin{proof}
Consider the function $\Phi : \EndCA(G,S;A) \to  \Hom(A^S, A)$ defined by $\Phi(\tau) = \mu$, where $\tau \in  \EndCA(G,S;A)$ and $\mu : A^S \to A$ is the local function of $\tau$. By Theorem \ref{th:local}, $\Phi$ is well-defined, and it is easy to see that $\Phi$ is bijective (using Remark \ref{re:local}). In order to show that $\Phi$ is a homomorphism, let $f \in \mathcal{F}$ be an $n$-ary function symbol. Let $\tau_1, \dots, \tau_n \in \EndCA(G,S;A)$ have local functions $\mu_1, \dots , \mu_n \in \Hom(A^S, A)$, respectively. We claim that the local function of $f^{\EndCA(G,S;A)}(\tau_1, \dots, \tau_n)$ is $f^{\Hom(A^S, A)}(\mu_1, \dots, \mu_n)$. Indeed, for all $x \in A^G$, we have 
\begin{align*}
f^{\EndCA(G,S;A)}(\tau_1, \dots, \tau_n)(x)(e) & =  f^{A}(\tau_1(x)(e) , \dots, \tau_n(x)(e) ) \\
& = f^{A}(\mu_1( x \vert_S) , \dots, \mu_n( x \vert_S) ) \\
&  = f^{\Hom(A^S, A)}(\mu_1, \dots, \mu_n)(x\vert_S).
\end{align*}
This shows that 
\[ \Phi (f^{\EndCA(G,S;A)}(\tau_1, \dots, \tau_n)) = f^{\Hom(A^S, A)}( \Phi(\tau_1), \dots, \Phi(\tau_n)),   \]
proving that $\Phi$ is an isomorphism.
\end{proof}

A partially ordered set $(I, \leq)$ is a \emph{directed set} if for every pair $i,j \in I$ there exists $z \in I$ such that $i \leq z$ and $j \leq z$. A \emph{directed family of algebras} is collection $\mathcal{A}:=\{ A_i : i \in I \}$ of algebras of the same type $\mathcal{F}$ indexed by a directed set $I$ together with a collection of maps $\{ \phi_{ij} : A_i \to A_j : i, j \in I, i \leq j \}$, such that $\phi_{ij} \circ \phi_{jk} = \phi_{ik}$ if $i\leq j \leq k$ and $\phi_{ii}$ is the identity map for all $i \in I$. In this situation, as shown in \cite[Section 21]{G08}, one may construct an algebra of type $\mathcal{F}$ called the \emph{direct limit} of $\mathcal{A}$ and denoted by $\lim \limits_{\to}(\mathcal{A})$.    

Let $\mathcal{P}_{<\infty}(G)$ be the set of all finite subsets of $G$. Ordered by inclusion, $\mathcal{P}_{<\infty}(G)$ is a directed set as an upper bound for a pair $S_1, S_2 \in   \mathcal{P}_{<\infty}(G)$ is $S_1 \cup S_2 \in \mathcal{P}_{<\infty}(G)$. Together with the inclusion maps, the collection $\mathcal{A}:=\{ \EndCA(G, S;A) : S \in \mathcal{P}_{<\infty}(G) \}$ is a directed family and its direct limit is given by its union:
\[ \lim_{\to}(\mathcal{A}) =  \bigcup_{S \in \mathcal{P}_{<\infty}(G)} \EndCA(G,S;A).  \]

As every cellular automaton has a finite memory set by definition, it is easy to see that
\[ \EndCA(G;A) = \bigcup_{S \in \mathcal{P}_{<\infty}(G)} \EndCA(G,S;A).  \]

Hence, Theorem \ref{th:CA-S} implies the following result. 

\begin{corollary}
Let $A$ be an entropic algebra. The algebra $\EndCA(G;A)$ is isomorphic to the direct limit of the directed family $\{ \Hom(A^S, A) : S \in \mathcal{P}_{<\infty}(G)  \}$.  
\end{corollary}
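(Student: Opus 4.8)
The plan is to assemble the isomorphism from the two ingredients already in place: Theorem \ref{th:CA-S}, which gives $\EndCA(G,S;A) \cong \Hom(A^S,A)$ for each finite $S$, and the identification of $\EndCA(G;A)$ with the direct limit of the family $\mathcal{A} = \{\EndCA(G,S;A) : S \in \mathcal{P}_{<\infty}(G)\}$ along the inclusion maps. What remains is to package these into a statement about the direct limit of the family $\mathcal{B} = \{\Hom(A^S,A) : S \in \mathcal{P}_{<\infty}(G)\}$; the key point is that a compatible system of isomorphisms between two directed families induces an isomorphism of their direct limits.

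First I would fix, for each $S \in \mathcal{P}_{<\infty}(G)$, the isomorphism $\Phi_S : \EndCA(G,S;A) \to \Hom(A^S,A)$ of Theorem \ref{th:CA-S}. Next I would specify the transition maps making $\mathcal{B}$ a directed family: for $S_1 \subseteq S_2$, the map $\psi_{S_1 S_2} : \Hom(A^{S_1},A) \to \Hom(A^{S_2},A)$ sends $\mu \mapsto \mu^\prime$ where $\mu^\prime(x) = \mu(x\vert_{S_1})$, exactly the relation between local functions for nested memory sets recorded in the third remark of Section \ref{sec:CA}. One checks $\mu^\prime$ is again a homomorphism because restriction $A^{S_2} \to A^{S_1}$ is a homomorphism and composition of homomorphisms is a homomorphism; the cocycle condition $\psi_{S_2 S_3}\circ \psi_{S_1 S_2} = \psi_{S_1 S_3}$ and $\psi_{SS} = \id$ are immediate from $(x\vert_{S_2})\vert_{S_1} = x\vert_{S_1}$. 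Then I would verify the crucial compatibility square: for $S_1 \subseteq S_2$, the diagram with the inclusion $\EndCA(G,S_1;A)\hookrightarrow \EndCA(G,S_2;A)$ on one side and $\psi_{S_1 S_2}$ on the other commutes, i.e. $\Phi_{S_2}\circ \iota_{S_1 S_2} = \psi_{S_1 S_2} \circ \Phi_{S_1}$. This is just the observation that if $\tau$ has memory set $S_1$ with local function $\mu$, then viewing $\tau$ as having memory set $S_2$ gives it local function $\mu^\prime(x) = \mu(x\vert_{S_1})$, which is precisely $\psi_{S_1 S_2}(\mu)$.

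With the compatibility in hand, I would invoke the universal property of the direct limit (as developed in \cite[Section 21]{G08}): the composites $\lambda_S : \Hom(A^S,A) \xrightarrow{\Phi_S^{-1}} \EndCA(G,S;A) \hookrightarrow \EndCA(G;A)$ form a cocone over $\mathcal{B}$ because of the commuting squares, hence induce a unique homomorphism $\lim_{\to}(\mathcal{B}) \to \EndCA(G;A)$; running the same argument with the $\Phi_S$ produces an inverse homomorphism, and uniqueness in the universal property forces the two composites to be the respective identities. Since $\EndCA(G;A) = \bigcup_S \EndCA(G,S;A) = \lim_{\to}(\mathcal{A})$, this gives $\EndCA(G;A) \cong \lim_{\to}(\mathcal{B})$. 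I expect the only mild obstacle to be bookkeeping with the transition maps of $\mathcal{B}$: unlike the family $\mathcal{A}$, whose maps are literal inclusions so that the direct limit is simply a union, the maps $\psi_{S_1 S_2}$ are genuine (injective) homomorphisms that are not set inclusions, so one must be slightly careful to state the induced-isomorphism-of-direct-limits lemma correctly rather than collapsing $\lim_{\to}(\mathcal{B})$ to a naive union; everything else is a routine diagram chase.
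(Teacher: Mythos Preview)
Your proposal is correct and follows exactly the route the paper takes: identify $\EndCA(G;A)$ with the union (direct limit) of the $\EndCA(G,S;A)$ and then transport through the isomorphisms $\Phi_S$ of Theorem~\ref{th:CA-S}. The paper simply asserts this as an immediate consequence and gives no further argument, whereas you spell out the transition maps $\psi_{S_1S_2}$ on the $\Hom$ side, check the naturality squares, and invoke the universal property; this extra care is appropriate and does not deviate from the paper's intended reasoning.
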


%%%%%%%%%%%%%%%%%%%%%%%%%%%%%%%%%%%%%%%%%%%%%%
\section{Cellular automata over particular algebras} \label{sec:particular}

\subsection{$R$-modules}

In this section, we assume that $R$ is a ring (associative with $1$) and $M$ is an $R$-module. In the language of universal algebra, this means that $M$ is an algebra of type $\mathcal{F} = \{ +, 0, r \cdot \; \vert \; r \in R \}$, where $+$ is binary operation, $0$ is a distinguished element, and $\{ r \cdot \; \vert \; r \in R\}$, is a family of unary operations. Important classes of $R$-modules are vector spaces (in which $R$ is a field) and abelian groups (in which $R=\mathbb{Z}$).

%The pair $(A,+)$ is in fact an abelian group with identity $0$, and the following identities are satisfied for every $r,s \in R$, $a,b \in A$:
%\begin{enumerate}
%\item $r \cdot (a+b) = r \cdot a + r \cdot b$,
%\item  $(r+s) \cdot a = r \cdot a + s \cdot a$,
%\item $(rs) \cdot a = r \cdot (s \cdot a)$,
%\item $1_R \cdot a = a$.
%\end{enumerate}

For any $x \in M^G$, define the support of $x$ as the set $\supp(x) := \{ g \in G : x(g) \neq  0 \}$. Let $M[G]$ be the subset of $M^G$ consisting of all configurations with finite support; this is an $R$-module with operations induced componentwise. Furthermore, we define a new binary operation on $M[G]$, called the \emph{convolution product}, as follows 
\[ (x \cdot y)(g) := \sum_{h \in G} x(h) y (h^{-1}g), \quad \forall x, y \in M[G].  \]
Notice that the sum above is finite because both $x$ and $y$ have finite support. In the case when $M=R$, $R[G]$ is known as the \emph{group ring} of $G$ over $R$, and has significant importance in the representation theory of $G$ (see \cite{MS02}).    

Equivalently, $M[G]$ may be seen as the set of formal sums $\sum_{g \in G} m_g g$, such that $m_g \in M$, for all $g \in G$, and the set $\{ g \in G : m_g \neq 0 \}$ is finite. 

\begin{remark}
As $M[G]$ consists of functions with finite support, we have
\[ M[G]  = \bigcup_{S \in \mathcal{P}_{<\infty}(G)} M^S.  \] 
\end{remark}  

If $A$ is an $R$-module, then both $\End(A)$ and $\End(A)[G]$ are $Z(R)$-modules, where $Z(R)$ denotes the center of $R$. The following result is a generalisation of \cite[Theorem 8.5.2.]{CSC10}.

\begin{theorem}\label{th:group-algebra}
Let $A$ be an $R$-module and $G$ a group. Then, $\EndCA(G;A) \cong \End(A)[G]$ as $Z(R)$-modules. 
\end{theorem}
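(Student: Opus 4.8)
The plan is to combine Theorem~\ref{th:CA-S} with the structure of $M[G]$ as a union of the $M^S$. Since an $R$-module is entropic, Theorem~\ref{th:CA-S} gives $\EndCA(G,S;A) \cong \Hom(A^S, A)$ for each finite $S$, and passing to the union over $S \in \mathcal{P}_{<\infty}(G)$ yields $\EndCA(G;A) \cong \bigcup_S \Hom(A^S, A)$ as a direct limit. So the task reduces to identifying $\bigcup_S \Hom(A^S, A)$ with $\End(A)[G]$ as $Z(R)$-modules, compatibly with the inclusion maps. First I would fix a finite $S \subseteq G$ and describe $\Hom(A^S, A)$ explicitly: a module homomorphism $\mu : A^S \to A$ is determined by its values on the ``standard basis'' configurations $\delta_s$ (defined by $\delta_s(s) = a$ variable, $\delta_s(t)=0$ for $t \neq s$), and since $A^S = \bigoplus_{s \in S} A$ internally, $\mu$ decomposes uniquely as $\mu(x) = \sum_{s \in S} \mu_s(x(s))$ with each $\mu_s \in \End(A)$. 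This establishes a $Z(R)$-module isomorphism $\Hom(A^S, A) \cong \End(A)^S$, sending $\mu$ to the tuple $(\mu_s)_{s \in S}$, which in turn corresponds to the element $\sum_{s \in S} \mu_s\, s$ of $\End(A)[G]$ supported on $S$.

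Next I would check that these isomorphisms are compatible with the directed-system structure: if $S \subseteq S'$, the inclusion $\EndCA(G,S;A) \hookrightarrow \EndCA(G,S';A)$ corresponds, on local functions, to $\mu \mapsto \mu'$ where $\mu'(x) = \mu(x|_S)$; under the decomposition this sends $(\mu_s)_{s \in S}$ to the tuple over $S'$ that agrees with $(\mu_s)$ on $S$ and is $0$ on $S' \setminus S$ --- exactly the inclusion $\End(A)^S \hookrightarrow \End(A)^{S'}$ viewed inside $\End(A)[G]$. Therefore the union (direct limit) $\bigcup_S \Hom(A^S, A)$ is identified with $\bigcup_S \End(A)^S = \End(A)[G]$, the module of finitely-supported functions $G \to \End(A)$, and the composite $Z(R)$-module isomorphism $\Psi : \EndCA(G;A) \to \End(A)[G]$ sends a cellular automaton $\tau$ with memory set $S$ and local function $\mu = \sum_{s \in S}\mu_s$ to $\sum_{s \in S}\mu_s\, s$. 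One should note this is well-defined independently of the choice of memory set, precisely because enlarging $S$ only pads with zero components.

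The step I expect to require the most care is making the decomposition $\mu(x) = \sum_{s \in S}\mu_s(x(s))$ rigorous and checking it is a $Z(R)$-module map in $\mu$: one writes $x = \sum_{s \in S} \iota_s(x(s))$ where $\iota_s : A \to A^S$ is the insertion homomorphism, uses that $\mu$ is additive to get $\mu(x) = \sum_{s} \mu(\iota_s(x(s)))$, and sets $\mu_s := \mu \circ \iota_s \in \End(A)$; that each $\mu_s$ respects the $R$-action and that $\mu \mapsto (\mu_s)_s$ is $Z(R)$-linear and bijective is then a direct verification. (If one wants the stronger statement that $\EndCA(G;A) \cong \End(A)[G]$ as \emph{rings}, with composition of cellular automata corresponding to convolution, that would take an extra computation matching $\tau \circ \sigma$ against the convolution product using the shift-equivariance of Theorem~\ref{th:action}; but the stated theorem only claims a $Z(R)$-module isomorphism, so the argument above suffices.)
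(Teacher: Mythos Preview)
Your proposal is correct and follows essentially the same route as the paper: apply Theorem~\ref{th:CA-S} to get $\EndCA(G,S;A)\cong \Hom(A^S,A)$, identify $\Hom(A^S,A)\cong \End(A)^S$ via the coordinate insertions, and pass to the union over finite $S$ to obtain $\End(A)[G]$. You are in fact more careful than the paper, which simply cites the isomorphism $\Hom(A^S,A)\cong\End(A)^S$ and takes the union without explicitly verifying compatibility of the maps along inclusions $S\subseteq S'$; your check that enlarging $S$ pads with zeros is exactly what makes the direct-limit identification rigorous.
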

\begin{proof}
If $A$ is an $R$-module, then $\Hom(A^2, A) \cong \End(A) \times \End(A)$ (see \cite[Corollary 2.32]{R09}). Using this and Theorem \ref{th:CA-S}, we obtain that for every finite $S \subseteq G$,
\[ \EndCA(G,S;A) \cong \Hom(A^S, A) \cong \prod_{s \in S} \End(A) = \End(A)^S.  \]
The result follows because
\[ \EndCA(G;A) \cong \bigcup_{S \in \mathcal{P}_{<\infty}(G)} \Hom(A^S, A) \cong \bigcup_{S \in \mathcal{P}_{<\infty}(G)} \End(A)^S = \End(A)[G]. \]
\end{proof}

An explicit isomorphism from $\Hom(A^S, A)$ to $\prod_{s \in S} \End(A)$ is defined as follows: the image of $\mu : A^S \to A$ is $\prod_{s \in S} (\mu \circ j_s)$, where $j_s : A \to A^S$ is the natural embedding given by
\[ j_s(a)(r) := \begin{cases}
a & \text{if } r=s \\
0 & \text{otherwise,}
\end{cases} \quad \forall a \in A, r \in S. \]
Hence, an explicit isomorphism $\Psi :  \EndCA(G;A) \to \End(A)[G]$ may be defined as follows: the image of a cellular automaton $\tau : A^G \to A^G$ with memory set $S$ and local function $\mu : A^S \to A$ corresponds to the element $\sum_{s \in S} (\mu \circ j_s) s$ in $\End(A)[G]$, given in the formal sums notation. From here we see that the minimal memory set of $\tau$ corresponds to the support of $\Psi(\tau)$. It follows, by the same proof as in \cite[Theorem 8.5.2.]{CSC10} that $\Psi$ satisfies
\[ \Psi( \tau \circ \sigma) = \Psi(\tau) \Psi(\sigma), \quad \forall \tau, \sigma \in \EndCA(G;A), \] 
where the operation on the right-hand side is the convolution product. 

\begin{example}
Let $A$ be an abelian group. Using the formal sums notation, the cellular automaton $\tau : A^G \to A^G$ with memory set $S \subseteq G$ and local function $\mu : A^S \to A$ defined in Example \ref{ex-sum} corresponds to the element $\sum_{s \in S} \id \; s$ of $\End(A)[G]$, where $\id : A \to A$ is the identity endomorphism.     
\end{example}

\begin{example}
Let $A$ be an $R$-module. Using the formal sums notation, the cellular automaton $\tau : A^G \to A^G$ defined via $\phi \in \End(A)$ as in Example \ref{Ex2} corresponds to the element $\phi e$ of $\End(A)[G]$.
\end{example}

\begin{remark}
By Theorem \ref{th:group-algebra}, the number of cellular automata in $\EndCA(G;A)$ admitting a memory set $S \subseteq G$ is $\vert \End(A) \vert^{\vert S \vert}$. This formula is specially useful when $\End(A)$ is finite.   
\end{remark}

\begin{corollary}
Let $G$ be a group and $S \subseteq G$ a finite subset of size $s$.
\begin{enumerate}
\item Suppose $A$ is a vector space of dimension $n < \infty$ over a finite field $F$. The number of linear cellular automata $\tau : A^G \to A^G$ admitting a memory set $S$ is $\vert F \vert^{n^2 s }$.

\item Let $A$ be a group isomorphic to $\mathbb{Z}_n$. The number of cellular automata $\tau : A^G \to A^G$ that are group homomorphisms, or $\mathbb{Z}$-module homomorphisms, admitting a memory set $S$ is $n^{s}$. 
\end{enumerate}
\end{corollary}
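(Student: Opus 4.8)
The plan is to derive both statements as direct consequences of the remark immediately preceding this corollary, namely that $\EndCA(G;A)$ admits $\vert \End(A) \vert^{\vert S \vert}$ cellular automata with memory set $S$. Thus the whole task reduces to computing $\vert \End(A) \vert$ in each of the two cases, where $\End(A)$ means the endomorphism ring of $A$ as an algebra of the appropriate type (an $F$-vector space in the first case, a group $\mathbb{Z}_n$ in the second).

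For part (1): if $A$ is an $n$-dimensional vector space over $F$, then $\End(A) = \Hom(A,A)$ is the ring of $F$-linear maps $A \to A$, which after choosing a basis is $M_n(F)$, the ring of $n \times n$ matrices over $F$. Hence $\vert \End(A) \vert = \vert F \vert^{n^2}$, and substituting into the formula gives $\left( \vert F \vert^{n^2} \right)^{s} = \vert F \vert^{n^2 s}$. I would state this in one or two lines.

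For part (2): if $A \cong \mathbb{Z}_n$ as an abelian group (equivalently as a $\mathbb{Z}$-module), a group homomorphism $\phi : \mathbb{Z}_n \to \mathbb{Z}_n$ is determined by $\phi(1) \in \mathbb{Z}_n$, and every choice of $\phi(1)$ gives a well-defined homomorphism (since $\mathbb{Z}_n$ is cyclic generated by $1$ with the single relation $n \cdot 1 = 0$, and $n \cdot \phi(1) = 0$ automatically holds). Therefore $\vert \End(\mathbb{Z}_n) \vert = n$, and the number of such cellular automata with memory set $S$ is $n^{s}$. I would also note that the notions of group homomorphism and $\mathbb{Z}$-module homomorphism coincide for abelian groups, so the two phrasings in the statement describe the same set.

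There is essentially no obstacle here: the only mild subtlety is making sure the reader sees that "$\End(A)$" in the preceding remark is interpreted with respect to the correct algebraic type in each application, and that in part (2) the equivalence between group- and module-endomorphisms is invoked. Both are one-sentence observations. So the proof is a short computation of $\vert \End(A) \vert$ followed by an appeal to the displayed counting formula in the remark.
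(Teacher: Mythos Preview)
Your proposal is correct and matches the paper's own proof essentially line for line: both parts are obtained from the preceding remark by computing $\vert \End(A) \vert$, using $\End(A) \cong M_{n \times n}(F)$ in part (1) and $\vert \End(\mathbb{Z}_n) \vert = n$ in part (2). Your additional justification for the value of $\vert \End(\mathbb{Z}_n) \vert$ and the remark on the equivalence of group and $\mathbb{Z}$-module homomorphisms are harmless elaborations of what the paper leaves as ``easy to see.''
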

\begin{proof}
When $A$ is a vector space, it is well-known that $\End(A)$ is isomorphic to the algebra of matrices $M_{n \times n}(F)$. Hence, $\vert \End(A) \vert = \vert M_{n \times n}(F) \vert = \vert F \vert^{n^2}$. Part (1.) follows by the previous remark.

For part (2.) it is easy to see that $\vert \End(\mathbb{Z}_n) \vert = n$, so the result again follows by the previous remark. 
\end{proof}

\begin{example}
Consider $A =\{ 0,1\}$ as a group isomorphic to $\mathbb{Z}_2$. Then, the number of \emph{linear} (or \emph{additive}) elementary cellular automata over $A^{\mathbb{Z}}$ is $2^3= 8$, as elementary CA admit a memory set $S=\{-1,0,1 \}$; explicitly, these are Rules 0, 60, 90, 102, 150, 170, 204 and 240.    
\end{example}

%%%%%%%%%%%%%%%%%%%%%%%%%%%%%%%%%%%%%%%%%%%%%%%%%%%%%%%%%%%%%%%%%%

\subsection{Boolean algebras}

A \emph{Boolean algebra} is a set $B$ equipped with two binary operations $\land : B \times B \to B$ and $\lor : B \times B \to B$ called \emph{meet} and \emph{join}, respectively, an unary operation $\neg : B \to B$ called \emph{complement} and two distinguished elements $0 \in B$ and $1 \in B$, called bottom and top, respectively. The meet and join are commutative and associative, and distributive between each other, $0$ is an identity for $\lor$, $1$ is an identity of $\land$, and complements satisfy $x \lor \neg x = 1$ and $x \land \neg x = 0$, for all $x \in B$ (see \cite{GH09} for details). 

The smallest example of a Boolean algebra is $\textbf{2} := \{ 0,1 \}$, which only contains the two distinguished elements. If $X$ is a set, the power set $\mathcal{P}(X)$ with meet, join and complement given by the intersection, union and complement of subsets, respectively, is a Boolean algebra. In fact, $\mathcal{P}(X) \cong \textbf{2}^{X}$ and every finite Boolean algebra is isomorphic to $\mathcal{P}(S)$, for some finite non-empty set $S$ (\cite[Ch. 15]{GH09}).

%If $A$ is a Boolean algebra, the set $A^G$ may be equipped with the product Boolean algebra structure as follows: for all $x, y \in A^G$, $g \in G$,
%\begin{align*}
%(x \land y) (g) & := x(g) \land y(g), \\
%(x \lor y) (g) & := x(g) \lor y(g), \\
%(\neg x) (g) & := \neg x(g). 
%\end{align*}
%The bottom and top of $A^G$ are, respectively, the constant configurations $\mathbf{0}$ and $\mathbf{1}$ defined by $\mathbf{0}(g)=0$ and $\mathbf{1}(g)=1$, for all $g \in G$. 
%
%A function $f : A \to B$ between Boolean algebras is a \emph{Boolean algebra homomorphism} if, for all $a, b \in A$,
%\begin{align*}
%f( a \lor b) & = f(a) \lor f(b), \\
%f(a \land b) & = f(a) \land f(b), \\
%f(0) = 0, \\
%f(1) = 1. 
%\end{align*}
%The above properties imply that $f(\neg a) = \neg f(a)$. 

We may define a partial order relation $\leq$ on a Boolean algebra $B$ as follows: $x \leq y$ if and only if $x \lor y = y$ (or, equivalently, if and only if $x \land y = x$). The non-zero minimal elements of $B$ with respect to this order are called the \emph{atoms} of $B$. For example, when $B$ is the power set $\mathcal{P}(S)$ of a set $S$, the atoms are precisely the singleton sets $\{ x\}$ with $x \in S$. Thus, $\mathcal{P}(S)$ has exactly $\vert S \vert$ atoms.   

An \emph{ideal} of a Boolean algebra $B$ is a subset $I \subseteq B$ such that $0 \in I$, for all $a,b \in I$, we have $a \lor b \in I$, and, for all $a \in I$, $x \in B$, we have $a \land x \in I$. An ideal $I$ of $B$ is \emph{maximal} if $I$ is properly contained in $B$ and there is no ideal $J$ of $B$ such that $I \subset J \subset B$. The \emph{ideal generated} by a subset $E$ of $B$, denoted by $\langle E \rangle$, is the smallest ideal of $B$ that contains $E$. An ideal generated by a singleton is called a \emph{principal ideal}. In fact, for any $y \in B$, a theorem of Stone (see \cite[Ch. 18]{GH09}) implies that
\[ \langle y \rangle = \{ x \in B : x \leq y  \}. \] 

The kernel of a Boolean homomorphism $\phi : B \to B^\prime$ between Boolean algebras is the set 
\[ \ker(\phi) := \{ b \in B : \phi(b) = 0 \}, \]
Kernels of Boolean homomorphisms are always ideals.

\begin{lemma}\label{le:boolean}
Let $G$ be a group and consider the Boolean algebra $\textbf{2}= \{0,1\}$. Then, the number of Boolean cellular automata over $\textbf{2}^G$ with memory set $S \subseteq G$ is $\vert S \vert$. 
\end{lemma}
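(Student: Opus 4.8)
The plan is to reduce the count to a count of homomorphisms via Corollary \ref{cor-hom}, and then to identify those homomorphisms explicitly. By Corollary \ref{cor-hom}, the number of Boolean cellular automata over $\textbf{2}^G$ admitting memory set $S$ equals $\vert \Hom(\textbf{2}^S, \textbf{2}) \vert$. Since $\textbf{2}^S \cong \mathcal{P}(S)$ as Boolean algebras, it suffices to prove that $\vert \Hom(\mathcal{P}(S), \textbf{2}) \vert = \vert S \vert$ for every finite set $S$.

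For each $s \in S$ I would introduce the \emph{point evaluation} $\phi_s : \mathcal{P}(S) \to \textbf{2}$ given by $\phi_s(T) = 1$ if $s \in T$ and $\phi_s(T) = 0$ otherwise. A routine check (using $s \in T_1 \cap T_2 \iff s\in T_1 \text{ and } s \in T_2$, $s \in T_1 \cup T_2 \iff s \in T_1 \text{ or } s \in T_2$, $s \in S\setminus T \iff s \notin T$, together with $\phi_s(\emptyset)=0$, $\phi_s(S)=1$) shows $\phi_s \in \Hom(\mathcal{P}(S), \textbf{2})$. Moreover the $\phi_s$ are pairwise distinct, since for $s \neq t$ we have $\phi_s(\{s\}) = 1 \neq 0 = \phi_t(\{s\})$. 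This already gives at least $\vert S \vert$ homomorphisms.

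For the reverse bound, let $\phi \in \Hom(\mathcal{P}(S), \textbf{2})$ be arbitrary. The atoms of $\mathcal{P}(S)$ are the singletons $\{s\}$ (as recalled in the text), and they satisfy $\bigvee_{s \in S} \{s\} = S$ and $\{s\} \land \{t\} = \emptyset$ for $s \neq t$. Applying $\phi$ yields $\bigvee_{s \in S} \phi(\{s\}) = \phi(S) = 1$ and $\phi(\{s\}) \land \phi(\{t\}) = 0$ for $s \neq t$, so in $\textbf{2}$ there is exactly one element $s_0 \in S$ with $\phi(\{s_0\}) = 1$ (in particular $S \ne \emptyset$, which also matches the claimed count when $S = \emptyset$, as then $\mathcal{P}(\emptyset)$ is trivial and admits no homomorphism to $\textbf{2}$). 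Now take any $T \subseteq S$: if $s_0 \in T$ then $\{s_0\} \subseteq T$ and monotonicity of $\phi$ forces $\phi(T) = 1$; if $s_0 \notin T$ then $T \subseteq S \setminus \{s_0\}$, and since $\phi(S\setminus\{s_0\}) = \neg\, \phi(\{s_0\}) = 0$, monotonicity forces $\phi(T) = 0$. Hence $\phi = \phi_{s_0}$, so every homomorphism is a point evaluation, and therefore $\vert \Hom(\mathcal{P}(S), \textbf{2}) \vert = \vert S \vert$. Combined with Corollary \ref{cor-hom}, this proves the lemma.

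I do not anticipate a real obstacle; the only step needing a little care is the uniqueness claim, namely that a homomorphism into $\textbf{2}$ is completely determined by its values on the atoms. This rests on the fact that every subset of the finite set $S$ is a finite join of atoms, together with preservation of finite joins and complements by $\phi$. Equivalently, one could run the argument through kernels: $\ker(\phi)$ is a maximal ideal of $\mathcal{P}(S)$, $\phi$ is determined by $\ker(\phi)$ since the target is $\textbf{2}$, every ideal of the finite algebra $\mathcal{P}(S)$ is principal, and $\langle T \rangle$ is maximal precisely when $T = S \setminus \{s\}$ is a co-atom, again giving exactly $\vert S \vert$ such homomorphisms.
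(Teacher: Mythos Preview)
Your proof is correct. Both you and the paper begin with Corollary~\ref{cor-hom} to reduce to counting $\Hom(\textbf{2}^S,\textbf{2})$, but thereafter you diverge: you give a direct, elementary argument by exhibiting the point evaluations $\phi_s$ and then showing uniqueness via the atom decomposition and monotonicity of Boolean homomorphisms, whereas the paper takes the ideal-theoretic route, invoking the bijection $\mu \mapsto \ker(\mu)$ between $\Hom(\textbf{2}^S,\textbf{2})$ and the maximal ideals of $\textbf{2}^S$, and then counting maximal ideals as the principal ideals $\langle \neg a\rangle$ generated by complements of atoms. Your approach is more self-contained (no appeal to the homomorphism--maximal-ideal correspondence or to the principality of ideals in finite Boolean algebras) and has the bonus of naming the homomorphisms explicitly, which immediately yields the corollary that the local functions are precisely the projections $\pi_s$. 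The paper's approach is shorter given the cited background. Amusingly, the alternative you sketch in your final paragraph \emph{is} the paper's proof.
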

\begin{proof}
By Corollary \ref{cor-hom}, the number of Boolean cellular automata over $\textbf{2}^G$ admitting memory set $S \subseteq G$ is equal to $\vert \Hom(\textbf{2}^S, \textbf{2}) \vert$. By \cite[Ch. 22]{GH09}, the set  $\Hom(\textbf{2}^S, \textbf{2})$ is in bijection with the set of maximal ideals of $\textbf{2}^S$ via $\mu \mapsto \ker(\mu)$, for every $\mu \in \Hom(\textbf{2}^S, \textbf{2})$. As $\textbf{2}^S$ is finite, every ideal is principal \cite[Corollary 18.2]{GH09}, so the maximal ideals of $\textbf{2}^S$ are given by $\langle \neg a \rangle$, where $a \in \textbf{2}^S$ is an atom. As $\textbf{2}^S \cong \mathcal{P}(S)$ has precisely $\vert S \vert$ atoms (corresponding to the singletons $\{ s\}$, $s \in S$), then $\textbf{2}^S$ has precisely $\vert S \vert$ maximal ideals, and the result follows. 
\end{proof}

\begin{corollary}
Let $G$ be a group and consider the Boolean algebra $\textbf{2}= \{0,1\}$. Let $\tau : \textbf{2}^G  \to \textbf{2}^G$ be a cellular automaton with memory set $S$. Then, $\tau$ is a Boolean homomorphism if and only its local function is a projection $\pi_s : \textbf{2}^S \to \textbf{2}$, for some $s \in S$. 
\end{corollary}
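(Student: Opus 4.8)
The plan is to reduce the statement to an explicit description of $\Hom(\textbf{2}^S,\textbf{2})$ and then recognise that description as the set of projections. By Theorem \ref{th:local}, the cellular automaton $\tau : \textbf{2}^G \to \textbf{2}^G$ is a Boolean homomorphism (i.e. lies in $\End(\textbf{2}^G)$) if and only if its local function $\mu : \textbf{2}^S \to \textbf{2}$ is a Boolean homomorphism. Hence it suffices to prove that $\Hom(\textbf{2}^S,\textbf{2}) = \{ \pi_s : s \in S \}$, where $\pi_s(x) = x(s)$; given this, $\tau$ is a Boolean homomorphism precisely when $\mu = \pi_s$ for some $s \in S$, and conversely any such $\mu$ is a homomorphism and thus defines an endomorphic cellular automaton.

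First I would verify the easy inclusion $\{\pi_s : s \in S\} \subseteq \Hom(\textbf{2}^S,\textbf{2})$: each $\pi_s$ preserves $\land,\lor,\neg,0,1$ because the operations on $\textbf{2}^S$ are defined componentwise, so this is just the restriction to the finite index set $S$ of the fact (used already for $A^G$) that the projection maps preserve the operations. Next, the maps $\pi_s$, $s \in S$, are pairwise distinct: writing $\delta_s \in \textbf{2}^S$ for the function with $\delta_s(s)=1$ and $\delta_s(t)=0$ for $t\neq s$, we have $\pi_s(\delta_s)=1$ while $\pi_t(\delta_s)=0$ for $t\neq s$. Therefore $\lvert\{\pi_s : s\in S\}\rvert = \lvert S\rvert$. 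By Lemma \ref{le:boolean} (together with Corollary \ref{cor-hom}) we know $\lvert\Hom(\textbf{2}^S,\textbf{2})\rvert = \lvert S\rvert$. Since $\{\pi_s : s\in S\}$ is a subset of $\Hom(\textbf{2}^S,\textbf{2})$ of the same finite cardinality, the two sets coincide, which completes the argument.

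An alternative route, essentially unpacking the proof of Lemma \ref{le:boolean}, is to argue directly rather than by counting. Given $\mu\in\Hom(\textbf{2}^S,\textbf{2})$, its kernel $\ker(\mu)$ is a maximal ideal of $\textbf{2}^S$. Since $\textbf{2}^S$ is finite, every ideal is principal \cite[Corollary 18.2]{GH09}, and the maximal ones are exactly $\langle\neg a\rangle$ with $a$ an atom of $\textbf{2}^S\cong\mathcal{P}(S)$, i.e. $a=\delta_s$ for some $s\in S$. By Stone's description of principal ideals,
\[ \ker(\mu)=\langle\neg\delta_s\rangle=\{x\in\textbf{2}^S : x\leq\neg\delta_s\}=\{x\in\textbf{2}^S : x(s)=0\}=\ker(\pi_s). \]
Finally, a $\textbf{2}$-valued Boolean homomorphism is determined by its kernel: since a maximal ideal of a Boolean algebra is prime, for every $x$ exactly one of $x,\neg x$ lies in $\ker(\mu)$, so $\mu(x)=0$ if and only if $x\in\ker(\mu)$; hence $\mu=\pi_s$.

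The only mildly delicate point is this last observation that a $\textbf{2}$-valued homomorphism is recovered from its kernel, but it is standard and is already implicit in the bijection $\mu\mapsto\ker(\mu)$ invoked in Lemma \ref{le:boolean}; everything else is bookkeeping with Theorem \ref{th:local} and facts quoted there.
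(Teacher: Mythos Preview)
Your argument is correct and follows exactly the paper's approach: observe that each $\pi_s$ lies in $\Hom(\textbf{2}^S,\textbf{2})$, invoke Lemma~\ref{le:boolean} (via Corollary~\ref{cor-hom}) to see there are only $\lvert S\rvert$ such homomorphisms, and conclude via Theorem~\ref{th:local}. The paper's proof is a one-line sketch of precisely this; your explicit check that the $\pi_s$ are pairwise distinct and your alternative kernel-based route are welcome elaborations but not genuine departures.
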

\begin{proof}
Observe that $\pi_s \in \Hom(\textbf{2}^S, \textbf{2})$, for each $s \in S$, and use Lemma \ref{le:boolean} and Theorem \ref{th:local}.
\end{proof}

\begin{example}
There are precisely $3$ elementary cellular automata over $\textbf{2}^\mathbb{Z}$ that are Boolean homomorphisms; explicitly, these are Rules 170, 204, and 240.
\end{example}

\begin{theorem}\label{th:boolean}
Let $G$ be a group and $A$ be a finite Boolean algebra. The number of Boolean cellular automata over $A^G$ with memory set $S \subseteq G$ is $(k \vert S \vert)^k$, where $k$ is the number of atoms of $A$. 	
\end{theorem}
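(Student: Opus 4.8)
The plan is to reduce, via Corollary~\ref{cor-hom}, to the purely combinatorial task of counting Boolean homomorphisms $A^S \to A$, and then to exploit the fact that a finite Boolean algebra is, up to isomorphism, the power set of its set of atoms.

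First I would invoke Corollary~\ref{cor-hom}: the number of endomorphic (here, Boolean) cellular automata over $A^G$ admitting the memory set $S$ equals $\vert \Hom(A^S, A) \vert$, so the whole task is to prove $\vert \Hom(A^S, A) \vert = (k\vert S\vert)^k$. Since $A$ is a finite Boolean algebra with $k$ atoms, \cite[Ch.~15]{GH09} gives $A \cong \mathcal{P}(U) \cong \textbf{2}^U$ for a finite set $U$ with $\vert U \vert = k$. Consequently $A^S \cong (\textbf{2}^U)^S \cong \textbf{2}^{U \times S} \cong \mathcal{P}(T)$, where $T := U \times S$ has $\vert T \vert = k\vert S\vert$; in particular $A^S$ is again a power set algebra, with exactly $k\vert S\vert$ atoms (the singletons $\{(u,s)\}$).

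It then remains to compute $\vert \Hom(\mathcal{P}(T), \mathcal{P}(U)) \vert$ for finite sets $T,U$. Because the direct product of algebras is a categorical product, the universal property yields $\Hom(\mathcal{P}(T), \textbf{2}^U) \cong \Hom(\mathcal{P}(T), \textbf{2})^U$, so it suffices to count $\vert \Hom(\mathcal{P}(T), \textbf{2}) \vert$. This is exactly the content of the proof of Lemma~\ref{le:boolean}: the map $\mu \mapsto \ker(\mu)$ is a bijection onto the maximal ideals of the finite Boolean algebra $\mathcal{P}(T)$, which are precisely the principal ideals $\langle \neg\{t\} \rangle$ with $t \in T$; hence $\vert \Hom(\mathcal{P}(T), \textbf{2}) \vert = \vert T \vert$. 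Combining, $\vert \Hom(A^S, A) \vert = \vert T \vert^{\vert U \vert} = (k\vert S\vert)^k$. (Equivalently, one can describe $\Hom(\mathcal{P}(T), \mathcal{P}(U))$ directly as being in bijection with the set $T^U$ of functions $U \to T$, via $f \mapsto (E \mapsto f^{-1}(E))$, which again gives the count $\vert T \vert^{\vert U \vert}$.)

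I do not expect a genuine obstacle here: every ingredient is either an already-established result (Corollary~\ref{cor-hom}, Lemma~\ref{le:boolean} and its proof) or a standard structural fact about finite Boolean algebras. The only points deserving a line of justification are the isomorphism $A^S \cong \mathcal{P}(U \times S)$ — a finite product of power set algebras is the power set algebra of the disjoint union of the underlying sets — and the use of the universal property of the direct product to split $\Hom(-,\textbf{2}^U)$ as a $\vert U\vert$-fold power; both are routine within the universal-algebra framework of Section~\ref{sec:algebras}.
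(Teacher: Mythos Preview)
Your proposal is correct and follows essentially the same route as the paper: reduce via Corollary~\ref{cor-hom} to counting $\vert \Hom(A^S,A)\vert$, identify $A\cong\textbf{2}^k$ and $A^S\cong\textbf{2}^{k\vert S\vert}$, split the $\Hom$ into $\textbf{2}$ using the universal property of the product, and then invoke the atom/maximal-ideal count from Lemma~\ref{le:boolean}. The only cosmetic difference is that you phrase things via $\mathcal{P}(U)$ and $\mathcal{P}(U\times S)$ rather than $\textbf{2}^k$ and $\textbf{2}^{ks}$, and you add the optional parenthetical description of $\Hom(\mathcal{P}(T),\mathcal{P}(U))$ via preimage maps.
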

\begin{proof}
By Corollary \ref{cor-hom}, we must show that $\vert \Hom(A^S, A) \vert = (k \vert S \vert)^k$. By \cite[Corollary 15.1]{GH09}, $A$ is isomorphic to $\textbf{2}^k$, where $k$ is the number of atoms of $A$, and $A^S$ is isomorphic to $\textbf{2}^{ks}$, where $s=\vert S \vert$, so 
\[ \Hom(A^S, A) \cong \Hom(\textbf{2}^{ks}, \textbf{2}^k). \]
In general, for algebras $B$, $B_i$, $i \in I$, the set $\Hom(B, \prod_{i \in I} B_i)$ is in bijection with $\prod_{i \in I} \Hom(B,B_i)$ via $\phi \mapsto (\pi_i \circ \phi)_{i \in I}$, for any $\phi \in \Hom(B, \prod_{i \in I} B_i)$ (c.f. \cite[p. 70]{M98}). Hence, the result follows by Lemma \ref{le:boolean}:
\[ \vert \Hom(\textbf{2}^{ks}, \textbf{2}^k) \vert = \left\vert \prod_{i=1}^k \Hom(\textbf{2}^{ks}, \textbf{2}) \right\vert = \prod_{i=1}^k \left\vert \Hom(\textbf{2}^{ks}, \textbf{2}) \right\vert = \prod_{i=1}^k ks  = (ks)^k. \] 
\end{proof}  \bigskip

\textbf{Acknowledgments:} The first author of this paper was supported by a CONACYT Basic Science Grant (No. A1-S-8013) from the Government of Mexico. We thank the insightful comments made by the anonymous referee of this paper.

%%%%%%%%%%%%%%%%%%%%%%%%%%%%%%%%%%%%%%%%%%%%%%%%%%%%%%%%%%%%%%%%%%%%%%%%%%%%%%%%%%%%%%%%%%%%%

\end{document}